\documentclass[12pt, a4paper]{amsart}
\usepackage[hmargin=30mm, vmargin=25mm, includefoot, twoside]{geometry}
\usepackage[bookmarksopen=true]{hyperref}

\usepackage[alphabetic]{amsrefs}

\usepackage{amsfonts,amssymb,verbatim}
\usepackage{latexsym}
\usepackage{mathrsfs}
\usepackage{stmaryrd}
\usepackage{xspace}
\usepackage{enumerate, paralist}
\usepackage{graphicx}
\usepackage[all,cmtip]{xy}
\usepackage[all]{xypic}

\usepackage[usenames,dvipsnames]{color}
 \usepackage{txfonts, pxfonts}

\newtheorem{ThmIntro}{Theorem}

\newtheorem{thm}{Theorem}[section]
\newtheorem{cor}[thm]{Corollary}
\newtheorem{lem}[thm]{Lemma}

\newtheorem{prop}[thm]{Proposition}
\theoremstyle{definition}
\newtheorem{defn}[thm]{Definition}

\newtheorem{que}[thm]{Question}
\newtheorem{conj}[thm]{Conjecture}
\theoremstyle{remark}

\newtheorem{ex}[thm]{Example}

\numberwithin{equation}{section}

\newcommand{\Z}{\mathbb{Z}}
\newcommand{\G}{W}

\newcommand{\N}{\mathbb{N}}

\newcommand{\Aut}{\text{Aut}}

\newcommand{\HH}{\mathcal{H}}

\newcommand{\eps}{\varepsilon}

\newcommand{\diam}{\textnormal{diam}\,}
\newcommand{\gi}{\textnormal{girth}\,}

\begin{document}
\title{Admitting a coarse embedding is not preserved under group extensions}

\author{Goulnara Arzhantseva}
\address{Universit\"at Wien, Fakult\"at f\"ur Mathematik\\
Oskar-Morgenstern-Platz 1, 1090 Wien, Austria.}
\email{goulnara.arzhantseva@univie.ac.at}

\author{Romain Tessera}
\address{Laboratoire de Math\'ematiques d'Orsay, Univ. Paris-Sud, CNRS, Universit\'e
Paris-Saclay, 91405 Orsay, France.
}
\email{romain.tessera@math.u-psud.fr}
\date{}
\subjclass[2010]{46B85, 20F69, 22D10, 20E22}
\keywords{Relative Kazhdan's property (T), Haagerup property, Gromov's a-T-menability, expander,  coarse embeddings, coarse amenability.}

\thanks{The research of G.A.\ was partially supported by the ERC grant ANALYTIC no.\ 259527. The research of R.T.\ was partially supported by 
the ANR Blanc ANR-14-CE25-0004-01, acronym GAMME} 
\baselineskip=16pt

\begin{abstract}
We construct a finitely generated  group which is an extension of two finitely generated groups coarsely embeddable into Hilbert space
but which itself does not coarsely embed into Hilbert space.
Our construction also provides a new infinite monster group:
the first example of a finitely generated group that does not coarsely embed into Hilbert space and yet does not contain a weakly embedded expander.
\end{abstract}
\maketitle

\section{Introduction}
We answer, in the negative, the following well-known question~\cite{DG}, see also~\cite[Section 3]{GK}:
\medskip

\noindent
\emph{Given two finitely generated groups coarsely embeddable into Hilbert space, does their extension necessary coarsely embed into Hilbert space?}\smallskip

The concept of coarse embedding was introduced by Gromov~\cite[p. 218]{Gr_ai} in his investigation of the Novikov conjecture (1965) on the homotopy invariance of higher signatures for closed manifolds.

\begin{defn}[Coarse embedding]
Let $(X_n,d_n)_{n\in \N}$ be a sequence of metric spaces and let $(Y, d_Y)$ be a metric space.
A sequence of maps $f_n\colon X_n\to Y$ is a \emph{ coarse embedding of $(X_n)_{n\in \N}$ into $Y$} if there exist two 
proper functions $\rho, \gamma\colon [0,\infty)\to [0,\infty)$ such that for all $n\in \N$ and all $x,y\in X_n$,
$$\rho(d_n(x,y))\leqslant d_Y\left(f_n(x),f_n(y)\right)\leqslant \gamma(d_n(x,y)).$$
\end{defn}
Applied to a constant sequence $X_n=G$, where $G$ is a countable discrete group  equipped with a proper left invariant metric (e.g. a finitely generated group endowed with the word length metric), this definition gives the notion of a group coarsely embeddable into a metric space. Every two proper left invariant metrics on a countable discrete group are known to be \emph{coarsely equivalent} (i.e. the two metric spaces admit coarse embeddings into each other). Therefore, the property that $G$ coarsely embeds into a metric space $Y$ does not depend on the choice of such a metric on the group. 
In this paper, we focus on the case where $Y$ is an infinite dimensional Hilbert space.

A countable discrete group coarsely embeddable into a Hilbert space satisfies the coarse Baum-Connes conjecture by a remarkable result of Yu~\cite{Yu}.  
Recently, Kasparov and Yu established the strong Novikov conjecture with coefficients for every countable group
coarsely embeddable into a Banach space from a wide class of Banach spaces, including Hilbert spaces, $\ell^p$-spaces,
and uniformly convex Banach spaces with certain unconditional bases~\cite{KYu}.
These and other related deep results have generated
an intense study of groups and metric spaces which are coarsely embeddable into Hilbert and Banach spaces.

Coarse embeddability of countable discrete groups into Hilbert space is usually investigated in comparison with the following properties:
 von Neumann's amenability; Haagerup property, also known as Gromov's a-T-menability;  
coarse amenability, equivalent to the existence of a topological amenable action of the group on a compact Hausdorff space (or 
to the exactness of the reduced $C^*$-algebra of the group). The implications between these properties are summarized as follows, see, for instance,~\cite{NYu}:
\[
\xymatrixcolsep{5pc}\xymatrix{
\hbox{Amenability} \ar@{=>}[d] \ar@{=>}[r] 
& \hbox{Coarse amenability} \ar@{=>}[d] \not\ni H\\
H\in\hbox{Haagerup property}  \ar@{.>}[ru] |{\SelectTips{cm}{}\object@{||}}|{}  \ar@{=>}[r] & \hbox{Coarse embedding into Hilbert space} \not\ni G}
\]

This diagram is rather informative and is at the origin of a lot of  intensive research.
Amenability and the Haagerup property
require in their definitions an equivariance under the group action on itself by left multiplication.
The right column properties are obtained by ``forgetting'' this equivariance.
Amenability and coarse amenability have characterizations that involve
the existence of appropriate functions with finite support. The Haagerup property and coarse embeddability 
relax this finite support condition into the support vanishing at infinity. Neither of the indicated implications can be reversed.

Permanence properties of these classes of groups are subject of thorough  study.
In particular, it is well-known that amenability and coarse amenability are preserved under taking group extensions~\cite{KW}.
In contrast, famous examples of semi-direct products with relative Kazhdan's property (T), such as $\Z^2\rtimes SL_2(\Z)$ relative to $\Z^2$, show that, in general,
the class of groups with the Haagerup property is not closed under taking group extensions. 
Therefore,  whether or not coarse embeddability into Hilbert space survives while taking group extensions, is an intriguing question.  
Several positive results are known. Cyclic extensions of torsion-free $C'(1/8)$ small cancellation groups~\cite{Sela},
HNN-extensions  and extensions of coarsely embeddable groups by coarsely amenable groups~\cite{DG},
restricted regular wreath products of coarsely embeddable groups  and 
some restricted permutational wreath products of coarsely embeddable groups~\cite{CSV, Li} and metric spaces~\cite{CD} are known to coarsely embed into Hilbert space.

Our main result is the following.  

\begin{ThmIntro}\label{thm:main}
There exists a finitely generated group which is a split extension of an (infinite rank) abelian group by a finitely generated group with the Haagerup property that does not coarsely embed into Hilbert space.
 \end{ThmIntro}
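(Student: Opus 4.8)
The plan is to build $G$ as a semidirect product $G=A\rtimes Q$ in which a single copy of the relative property (T) pair $(\Z^2\rtimes \SL_2(\Z),\Z^2)$ is reproduced at infinitely many scales by an abelian group on which $\SL_2(\Z)$ acts, while the acting quotient $Q$ stays a-T-menable. Concretely, I would set $A=\bigoplus_{i\in\Z}\Z^2$ and $Q=\SL_2(\Z)\times\Z$, letting $\SL_2(\Z)$ act diagonally on the layers and $\Z$ act by shifting the index $i$. First I would record the cheap facts. The group $A$ is abelian of infinite rank; $Q$ is finitely generated and has the Haagerup property, being a product of the (virtually free, hence a-T-menable) group $\SL_2(\Z)$ with $\Z$; the extension splits by construction; and $G$ is finitely generated because $A$ is generated as a $Q$-module by the single layer vector $\delta_0\otimes e_1$ (the shift spreads it over all layers and $\SL_2(\Z)\cdot e_1$ exhausts the primitive vectors, whose $\Z$-span is $\Z^2$). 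Since $A$ is amenable and $Q$ is a-T-menable, both coarsely embed into Hilbert space; the whole content of the theorem is therefore that $G$ does \emph{not}.

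To obstruct embeddability I would isolate, for each $m$, the finite subset $N_m\subset G$ consisting of the layer-$0$ vectors with coordinates in $\{0,\dots,m-1\}$, together with the two ingredients that make it rigid. The geometric ingredient is that $\Z^2$ is exponentially distorted in $\Z^2\rtimes\SL_2(\Z)$: words of length $\ell$ in the two standard unipotents send $(1,0)$ to vectors of Euclidean norm $\sim\phi^{\ell}$, so the word length of $(a,b)$ is comparable to $\log(|a|+|b|+2)$. Hence $N_m$ has cardinality $m^2$ but $G$-diameter only $O(\log m)$, and a volume count shows that any subset of $N_m$ of density at least $1/2$ still has $G$-diameter $\gtrsim\log m$. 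The representation-theoretic ingredient is that the relative property (T) of $(\Z^2\rtimes\SL_2(\Z),\Z^2)$, with its finite Kazhdan set $K$ and constant $\eps$, descends to every congruence quotient $((\Z/m)^2\rtimes\SL_2(\Z),(\Z/m)^2)$ with the \emph{same} $K$ and $\eps$; this uniformity in $m$ is what I would exploit.

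From the uniform relative property (T) I would extract a uniform concentration inequality: there is a constant $C$ so that for every $1$-Lipschitz map $f$ from $G$ to a Hilbert space $\HH$ and every $m$,
\[
\frac{1}{m^{2}}\sum_{x\in N_m}\bigl\|f(x)-c_m\bigr\|^{2}\ \le\ C,
\]
where $c_m$ is the centroid of $f(N_m)$. Granting this, the theorem follows: if $f$ were a coarse embedding with proper lower control $\rho$, Chebyshev's inequality would furnish a subset $E_m\subset N_m$ of density at least $1/2$ on which $f$ has diameter at most $2\sqrt{2C}$; by the distortion estimate $E_m$ contains two points at $G$-distance $\gtrsim\log m$, whence $\rho(\log m)\lesssim 2\sqrt{2C}$ for all $m$, contradicting the properness of $\rho$.

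The main obstacle is the concentration inequality, and the subtlety is sharper than it looks. Applying relative property (T) naively bounds the variance of $f|_{N_m}$ by its gradient \emph{along the $\SL_2(\Z)$-action}, but moving $x$ to $s\cdot x$ inside a layer is the commutator $s\cdot x-x$, whose $G$-length is itself of order $\log m$; such gradients are $\gamma(O(\log m))$ rather than $O(1)$, and the resulting bound is too weak to contradict properness. The real work is to deploy relative property (T) so that only genuine $O(1)$ displacements along the generators of $G$ enter the right-hand side---passing from the arbitrary, non-equivariant map $f$ to a controlled almost-cocycle by a barycenter/averaging construction over the finite layers, and using the logarithmic (hence, on the relevant scale, nearly isometric) geometry of the layers to convert the spectral gap of the $\SL_2(\Z/m)$-action into a bona fide Poincar\'e inequality with bounded gradients. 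This is exactly the step that separates a true obstruction from the harmless expander quotients possessed even by free groups, and it is where the argument must do its real work.
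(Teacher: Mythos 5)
Your construction cannot work, because the group you build \emph{does} coarsely embed into Hilbert space. With $A=\bigoplus_{i\in\Z}\Z^2$ abelian (hence amenable, hence coarsely amenable) and $Q=\SL_2(\Z)\times\Z$ coarsely amenable ($\SL_2(\Z)$ is virtually free), your $G=A\rtimes Q$ is an extension of a coarsely amenable group by a coarsely amenable group, hence coarsely amenable by Kirchberg--Wassermann, hence coarsely embeddable. So the uniform concentration inequality you defer to ``the real work'' is not merely delicate --- it is false: a coarse embedding with lower control $\rho$, rescaled to be $1$-Lipschitz, has $|f^{-1}(B(v,\sqrt{2C}))|$ bounded uniformly in $v$ (by the cardinality of a ball of radius $\sup\{t:\rho(t)\leqslant 2\sqrt{2C}\}$ in $G$), whereas your inequality would force at least $m^2/2$ points of $N_m$ into a single such ball. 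The obstacle you correctly identify --- that relative property (T) of $(\Z^2\rtimes\SL_2(\Z),\Z^2)$ only controls displacements by conjugation, which cost $\asymp\log m$ in word length --- is not a technicality to be engineered around; it is precisely why relative property (T) of an infinite pair never obstructs non-equivariant embeddings. The congruence quotients $(\Z/m)^2\rtimes\SL_2(\Z/m)$ do carry a uniform spectral gap, but they are \emph{quotients} of your group, not subspaces of its Cayley graph, exactly like the expander quotients possessed by free groups.

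The ingredient your proposal is missing, and which the paper supplies, is a mechanism making the relevant \emph{finite} relative-property-(T) groups appear as subgraphs of the Cayley graph. The acting group there is not a tame a-T-menable group like $\SL_2(\Z)\times\Z$ but a Haagerup monster $H$ whose Cayley graph contains isometric copies of finite Cayley graphs $Cay(H_n,T_n)$ covering the components $Cay(G_n,S_n)$ of a dg-bounded expander, with compatible surjections $H_n\twoheadrightarrow G_n$ and $H\twoheadrightarrow G$. The finite wreath products $\G_n=\Z/2\Z\wr_{G_n}H_n$ then embed as subgraphs of $Cay(\G,\Sigma)$ (Corollary~\ref{cor:subgraph}), and --- because $\G_n$ is finite --- an arbitrary, completely non-equivariant map $f\colon\G_n\to\HH$ produces a genuine conditionally negative definite function $w\mapsto\|f-\rho_n(w)f\|_2^2$ via the regular representation. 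Only then does the uniform relative property (T) of the pairs $(\G_n,X_n)$ (Chifan--Ioana combined with property $(\tau)$ of the expander) convert into the non-equivariant Poincar\'e inequality of Theorem~\ref{thm:Relpoincare}. This finiteness step has no analogue in your construction, and it forces the acting group itself to contain expander-like subgraphs --- which is incompatible with exactness and is exactly why your choice of $Q$ is too small.
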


We specifically show that a certain restricted permutational wreath product $\Z/2\Z\wr_{G}H$ does not coarsely embed 
into Hilbert space. Here, $G$ is a finitely generated group with no coarse embedding into Hilbert space and
$H$ is a finitely generated group with the Haagerup property\footnote{Actually, this group has an even stronger property: it acts properly on a CAT(0) cubical complex.} which is not coarsely amenable and such that
there is a surjective homomorphism $H \twoheadrightarrow G$ (that induces the action of $H$ on $G$ by left translation and defines the wreath product). 

The group $\Z/2\Z\wr_{G}H$ does not yet answer the above well-known question~\cite{DG, GK}, as the kernel of its surjection onto $H$ is an infinite rank abelian torsion group which is not finitely generated. However, we modify this example to produce a required extension of two \emph{finitely generated} groups. 
Namely, we consider a surjective homomorphism $F\twoheadrightarrow G$ from a finitely generated free group $F$ onto $G$, and form the restricted permutational wreath product $\Z/2\Z\wr_{G}(H\times F)$, where $G$ is the $H\times F$-set, with $H$ acting as above by left translation via its surjection onto $G$, and $F$ acting by right translation via its surjection onto $G$. Since these two actions commute, this indeed gives an $H\times F$ action on $G$. Thus, $\Z/2\Z\wr_{G}(H\times F)$ is the extension of $\Z/2\Z\wr_{G} F$ by $H$ and both groups are finitely generated. Now $\Z/2\Z\wr_{G}F$ is coarsely amenable, being an extension of two coarsely amenable groups \cite{AR}. In particular, $\Z/2\Z\wr_{G}F$ coarsely embeds into a Hilbert space. We therefore obtain
\begin{ThmIntro}\label{thm:main'}
There exists a finitely generated group which is a split extension of a finitely generated coarsely amenable group by a finitely generated group with the Haagerup property that does not coarsely embed into Hilbert space.
 
 In particular, coarse embeddability into Hilbert space is not stable under taking group extensions among finitely generated groups. 
 \end{ThmIntro}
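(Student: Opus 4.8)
The plan is to deduce Theorem~\ref{thm:main'} from Theorem~\ref{thm:main} together with the elementary permanence properties of coarse embeddability and of coarse amenability, following the construction already outlined above. Fix the finitely generated groups $G$, $H$ and the finitely generated free group $F$, together with the surjections $H\twoheadrightarrow G$ and $F\twoheadrightarrow G$, and set $\Gamma:=\Z/2\Z\wr_{G}(H\times F)=\left(\bigoplus_{G}\Z/2\Z\right)\rtimes(H\times F)$, where $H$ acts on the index set $G$ by left translation and $F$ by right translation through their respective surjections. The first thing I would check is that all three relevant groups are finitely generated: since $H\twoheadrightarrow G$ and $F\twoheadrightarrow G$ are onto, each of the actions of $H$, of $F$, and of $H\times F$ on $G$ is transitive, so the base group $\bigoplus_{G}\Z/2\Z$ is generated as a normal subgroup by the single generator $\delta_{e}$ under each action; hence $\Gamma$, the kernel $N:=\Z/2\Z\wr_{G}F$, and the subgroup $\Z/2\Z\wr_{G}H$ are all finitely generated.

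Next I would record the extension. The projection $H\times F\to H$ induces a surjection $\pi\colon\Gamma\twoheadrightarrow H$ whose kernel is exactly $N=\left(\bigoplus_{G}\Z/2\Z\right)\rtimes(\{e\}\times F)=\Z/2\Z\wr_{G}F$, so $\Gamma$ is an extension of $N$ by $H$; it is split because $h\mapsto(0,(h,e))$ is a homomorphic section of $\pi$. To see that $N$ is coarsely amenable I would view it as the extension $1\to\bigoplus_{G}\Z/2\Z\to N\to F\to 1$: the base group is abelian, hence amenable, hence coarsely amenable, and $F$ is free, hence coarsely amenable, so by the permanence of coarse amenability under group extensions~\cite{AR} the group $N$ is coarsely amenable. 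In particular $N$ coarsely embeds into Hilbert space by the implications summarized in the diagram of the introduction, and $H$ is finitely generated with the Haagerup property by construction. Thus $\Gamma$ is a split extension of the two required types of finitely generated groups, and it remains only to show that $\Gamma$ does not coarsely embed into Hilbert space.

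For this last and decisive point I would exhibit $\Z/2\Z\wr_{G}H$ as a subgroup of $\Gamma$: the subgroup generated by $\bigoplus_{G}\Z/2\Z$ and the copy $H\times\{e\}$ is precisely $\left(\bigoplus_{G}\Z/2\Z\right)\rtimes H=\Z/2\Z\wr_{G}H$, with $H$ acting on $G$ by left translation, which is exactly the group to which Theorem~\ref{thm:main} applies. Coarse embeddability into Hilbert space passes to subgroups of countable groups, since the restriction of a proper left-invariant metric to a subgroup is again a proper left-invariant metric: restricting a hypothetical coarse embedding of $\Gamma$ would yield one of $\Z/2\Z\wr_{G}H$, contradicting Theorem~\ref{thm:main}. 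Hence $\Gamma$ does not coarsely embed into Hilbert space, which completes the proof and, a fortiori, shows that coarse embeddability is not stable under taking extensions of finitely generated groups.

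The point to get right here, rather than any genuine analytic obstacle, is the bookkeeping of the two commuting actions: one must verify that the left $H$-action and the right $F$-action on $G$ genuinely commute (so that $H\times F$ really acts on $G$), that $\ker\pi$ is the full wreath product $\Z/2\Z\wr_{G}F$ rather than a smaller subgroup, and that the internal subgroup $\langle\bigoplus_{G}\Z/2\Z,\,H\times\{e\}\rangle$ reproduces exactly the action on $G$ used in Theorem~\ref{thm:main}. Once these identifications are in place, all the analytic content is already carried by Theorem~\ref{thm:main} and by the extension permanence of coarse amenability~\cite{AR}.
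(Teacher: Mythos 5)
Your proposal is correct and follows essentially the same route as the paper: form $\Z/2\Z\wr_{G}(H\times F)$, identify the kernel of the projection onto $H$ as $\Z/2\Z\wr_{G}F$, get coarse amenability of that kernel from the extension permanence of \cite{AR}, and obstruct coarse embeddability of the whole group via the subgroup $\Z/2\Z\wr_{G}H$ from Theorem~\ref{thm:main}. The only difference is that you make explicit the subgroup argument and the verification that the two translation actions commute, which the paper leaves implicit.
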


Concrete groups $G$ and $H$ which are used for our construction are chosen among infinite monsters.
 $G$ is the  \emph{special Gromov monster}: a finitely generated group which
contains an expander graph (see Definition~\ref{def:ex}) isometrically in its Cayley graph;  
$H$ is the \emph{Haagerup monster}: a finitely generated group which satisfies the Haagerup property  but 
is not coarsely amenable, chosen so that $H$ contains a certain finite-sheeted covering of a suitable expander graph isometrically in its Cayley graph; 
see Section~\ref{sec:graphical} for comments on the existence of such groups.

Our result yields additional counterexamples to Conjecture~1.4 from \cite{CSV}, where the authors 
expected that the Haagerup property is preserved under taking restricted permutational wreath products.
This conjecture was first negated  in \cite[Corollary 3.4]{CI}  by Chifan-Ioana whose results
immediately imply that $\Z/2\Z\wr_{G}H$ has not the Haagerup property or, in other words, has no \emph{equivariant} coarse embeddings into Hilbert space.
 Theorem~\ref{thm:main} gives a counterexample also to the non-equivariant counterpart of this conjecture. 

In proving Theorem~\ref{thm:main}, we also answer, in the affirmative, the following open problem~\cite[Section 6]{GK}, see also~\cite[Section 5.7]{NYu} and~\cite[Section 8]{AT}:
\medskip

\noindent
\emph{Does there exist a finitely generated group which does not coarsely embed into Hilbert space and yet has no weakly embedded expander?}\smallskip

\begin{defn}[Weak embedded expander]
Let $(X_n,d_n)_{n\in \N}$ be an expander graph and let $(Y, d_Y)$ be a metric space.
A sequence of maps $f_n\colon X_n\to Y$ is a \emph{weak embedding of $(X_n)_{n\in \N}$ into $Y$}
if there exists $D>0$ such that $f_n$ are $D$-Lipschitz and
$$
\lim_{n\to\infty}\sup_{x\in X_n}\frac{|f_n^{-1}(f_n(x))|}{|X_n|} =0.
$$
\end{defn}

To admit a weakly embedded expander is a well-known obstruction for a metric space to coarsely embed into Hilbert space~\cite{M,Gr_sp,Gr_rw}.
It is also crucial in Higson-Lafforgue-Skandalis work to refute the Baum-Connes conjecture with coefficients~\cite{HLS}. 

In a recent paper~\cite{AT}, using the notion of relative expansion, we construct many regular \emph{graphs} which do not coarsely embed into Hilbert space and admit no weakly embedded expanders. This gives the first examples of graphs with bounded degree not coarsely embeddable into Hilbert space and yet having no weakly embedded expander. 
The proof that the group $\Z/2\Z\wr_{G}H$ does not coarsely embed into Hilbert space relies on the fact that it contains an embedded sequence of such relative expanders. As a by-product, we obtain 
the first example of a \emph{group} which does not coarsely embed into Hilbert space and yet admits no weakly embedded expanders. Indeed, this follows from the fact that an extension of two groups which are coarsely embeddable into Hilbert space does not contain weakly embedded expanders~\cite[Proposition~2]{AT}. Our group $\Z/2\Z\wr_{G}H$ is such an extension.

\begin{ThmIntro}\label{thm:mainbis}
There exists a finitely generated group that does not coarsely embed into Hilbert space and yet does not contain weakly embedded expanders. 
\end{ThmIntro}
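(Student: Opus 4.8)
The plan is to take as the required group the very wreath product $W = \Z/2\Z \wr_G H$ that underlies Theorem~\ref{thm:main}, and to verify for it the two assertions of the statement. The first — that $W$ does not coarsely embed into Hilbert space — is nothing but Theorem~\ref{thm:main} itself, so no further argument is needed on that front. The whole proof therefore reduces to the second assertion: that $W$ contains no weakly embedded expander.

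To this end I would first record the algebraic structure of $W$ as a short exact sequence
$$1 \longrightarrow K \longrightarrow W \longrightarrow H \longrightarrow 1,$$
where $K = \bigoplus_{g\in G} \Z/2\Z$ is the kernel of the canonical projection onto $H$. The key observation is that both $K$ and $H$ coarsely embed into Hilbert space. For $K$: it is a restricted direct sum of finite groups, hence locally finite and therefore amenable; and an amenable countable discrete group (equipped with any proper left-invariant metric, which is legitimate here since $K$ need not be finitely generated) coarsely embeds into Hilbert space. For $H$: it has the Haagerup property by construction, which again yields coarse embeddability. Thus $W$ is an extension of two groups, each coarsely embeddable into Hilbert space.

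With this in hand the conclusion is immediate from \cite[Proposition~2]{AT}, which asserts precisely that an extension of two coarsely embeddable groups cannot contain a weakly embedded expander. Applying it to the exact sequence above shows that $W$ has no weakly embedded expander; and since $W$ is finitely generated (the transitive left-translation action of the finitely generated group $H$ on the index set $G$, via its surjection onto $G$, makes the wreath product finitely generated) and does not coarsely embed into Hilbert space, it is the desired example.

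I do not expect a genuine technical obstacle in the above, since the two substantive ingredients — Theorem~\ref{thm:main} (non-embeddability, obtained from an embedded sequence of \emph{relative} expanders) and \cite[Proposition~2]{AT} (the permanence statement) — are established elsewhere. The conceptual subtlety worth emphasizing, rather than a difficulty to be overcome, is the apparent tension in the example: the quotient $G$ of $W$ contains an expander isometrically in its Cayley graph, and $W$ itself fails to embed because of an embedded family of relative expanders, yet neither phenomenon produces a weakly embedded expander inside $W$. The present theorem is thus the formal juxtaposition of the two cited results, once one notes that the kernel $K$ is amenable and hence coarsely embeddable.
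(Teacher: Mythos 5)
Your proposal is correct and coincides with the paper's own argument: the authors likewise take $\Z/2\Z\wr_G H$, invoke Theorem~\ref{thm:main} for non-embeddability, and rule out weakly embedded expanders by viewing the group as an extension of the coarsely embeddable (locally finite, abelian) kernel $\bigoplus_G \Z/2\Z$ by the Haagerup group $H$ and applying \cite[Proposition~2]{AT}. No discrepancies to report.
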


\subsection*{Acknowledgment}  We are grateful to the Erwin Schr\"odinger Institute for Mathematics and Physics for hospitality 
 during ``Measured group theory'' program in 2016.  We thank Damian Sawicki for signaling a point that we had overlooked in a previous version\footnote{Namely, that the graphs $Cay(\G_n,\Sigma_n)$ may not necessarily embed isometrically into $Cay(\G,\Sigma)$, cf. Corollary~\ref{cor:subgraph}.}.

\section{Graphs and infinite coarse monsters}
\subsection{Graphs and graphical presentations}\label{sec:graphical}
Let $S$ be a set and $S^{\pm }=S\sqcup S^{-}$, where $S^-$ denotes the set of formal  inverses $\{s^{-1} \mid s\in S\}$.
A graph $\Gamma$ is \emph{$S$-labeled} if each edge has an orientation and each oriented edge has a label $s\in S^{\pm }$ so that the labels of opposite edges are formal inverses.
A label of a path in an $S$-labeled graph is, by definition, a word in letters from $S^{\pm }$ which is 
the concatenation of labels of the edges along the path.  A labelling is \emph{reduced} if the labels of reduced paths are freely reduced words.

\begin{defn}[Graphical presentation of a group]\label{def:group} The group defined by an  $S$-labeled graph $\Gamma$ is the
group defined by the \emph{graphical presentation} $$G(\Gamma)=\langle S \mid R\rangle,$$
where $R$ denotes the set of labels of all non-trivial simple closed paths in $\Gamma$.
\end{defn}

Every $S$-generated group possesses such a graphical presentation: 
$\Gamma$ can be taken to be a disjoint union of cycle graphs, $S$-labelled by the relator words.

The following observation is useful for our construction.

\begin{lem}[Quotients from coverings]\label{lem:cover}
Let $p\colon \widetilde\Gamma\to\Gamma$ be a covering of $S$-labeled graphs, i.e. a label-preserving graph covering.
Then there exists a  surjective homomorphism $G(\widetilde\Gamma)\twoheadrightarrow G(\Gamma)$ with the identity map $s\mapsto s$ on the generators.
\end{lem}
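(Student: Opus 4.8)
The plan is to reduce the statement to a purely combinatorial fact about labels of closed paths. Since $G(\widetilde\Gamma)=\langle S\mid\widetilde R\rangle$ and $G(\Gamma)=\langle S\mid R\rangle$ are both quotients of the free group $F(S)$ on $S$, the identity map on $S$ extends to the identity of $F(S)$ and descends to a homomorphism $G(\widetilde\Gamma)\to G(\Gamma)$ precisely when every relator $w\in\widetilde R$ represents the identity in $G(\Gamma)$; this homomorphism is automatically surjective because its image contains all of $S$, which generates $G(\Gamma)$. Thus it suffices to show that the label of every non-trivial simple closed path $\widetilde c$ of $\widetilde\Gamma$ is trivial in $G(\Gamma)$.

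First I would transport the problem downstairs using the covering. Writing $\widetilde c=\widetilde e_1\cdots\widetilde e_n$ and applying $p$ edge by edge yields a closed path $c:=p(\widetilde e_1)\cdots p(\widetilde e_n)$ in $\Gamma$, and since $p$ is label-preserving, $c$ and $\widetilde c$ carry the same label. Hence it is enough to prove that $\ell(c)$ is trivial in $G(\Gamma)$. The subtlety here — and what I expect to be the main obstacle — is that a covering map may fold a simple loop of $\widetilde\Gamma$ onto a \emph{non}-simple loop of $\Gamma$, so one cannot conclude directly that $\ell(c)\in R$. This is exactly why the conclusion is a homomorphism of graphically presented groups rather than an isometric inclusion of the graphs, and it is the point flagged in the acknowledgement footnote.

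The heart of the argument is therefore the following sub-fact: in any $S$-labeled graph, the label of every closed path represents the identity in the associated graphical-presentation group. I would prove this by induction on the length $n$ of the closed path $c=e_1\cdots e_n$ based at $v_0$, with vertices $v_0,v_1,\dots,v_n=v_0$. If $v_0,\dots,v_{n-1}$ are pairwise distinct, then $c$ is a simple closed path, so its label either lies in $R$ by definition or is freely trivial (when $c$ backtracks), and in either case it dies in $G(\Gamma)$. Otherwise some vertex repeats, say $v_i=v_j$ with $0\le i<j\le n$ and $(i,j)\ne(0,n)$; this splits $c$ into a strictly shorter inner loop $c_1=e_{i+1}\cdots e_j$ based at $v_i$ and a strictly shorter outer loop $c_2=e_1\cdots e_i e_{j+1}\cdots e_n$ based at $v_0$. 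Setting $u=\ell(e_1)\cdots\ell(e_i)$, one has the free-group identity $\ell(c)=\bigl(u\,\ell(c_1)\,u^{-1}\bigr)\,\ell(c_2)$, and by the induction hypothesis both $\ell(c_1)$ and $\ell(c_2)$ are trivial in $G(\Gamma)$, whence so is $\ell(c)$.

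Combining these steps gives $\ell(\widetilde c)=\ell(c)=1$ in $G(\Gamma)$ for every $w=\ell(\widetilde c)\in\widetilde R$, which establishes that the identity on generators descends to the desired surjection $G(\widetilde\Gamma)\twoheadrightarrow G(\Gamma)$. The only checks I would leave implicit, as routine, are the degenerate base cases of the induction (paths of length $0,1,2$ and simple backtracking) and the verification that $p(\widetilde e_1)\cdots p(\widetilde e_n)$ is genuinely a path, both of which are immediate from $p$ being a label-preserving graph morphism.
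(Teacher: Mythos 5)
Your proof is correct and follows essentially the same route as the paper: push each simple closed path of $\widetilde\Gamma$ down through the label-preserving covering to a closed path in $\Gamma$ with the same label, observe that this label is trivial in $G(\Gamma)$, and conclude by von Dyck's theorem. The only difference is that you spell out, via an induction on path length, the standard fact that labels of arbitrary (not necessarily simple) closed paths die in the graphical-presentation group, a step the paper leaves implicit behind the remark that $p_*\colon\pi_1(\widetilde\Gamma)\hookrightarrow\pi_1(\Gamma)$; your explicit treatment of this point is exactly the right place to put the care, since it is where a naive ``simple maps to simple'' argument would fail.
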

\begin{proof}
This is a graph covering so we have $\pi_1(\widetilde\Gamma)\stackrel{\ p_*}\hookrightarrow\pi_1(\Gamma).$ 
Since the covering is label-preserving, it follows from Definition~\ref{def:group}, that all the relator words of $G(\widetilde\Gamma)$
represent the identity element in $G(\Gamma)$. By classical von Dyck's theorem, the required surjection is immediate.
\end{proof}

We view a graph $\Gamma$ as a sequence of its connected components $\Gamma=(\Gamma_n)_{n\in\mathbb{N}}$ each
of which is equipped with the edge-length metric.
We focus on two specific graphs which are used in the construction of the above-mentioned infinite monster groups $G$ and $H$: an expander and a graph with walls, respectively.

We begin with some auxiliary definitions. Recall that the \emph{girth} of a graph is the edge-length of its shortest non-trivial cycle and
the \emph{diameter} of a graph is the greatest edge-length distance between any pair of its vertices.

\begin{defn}[{\rm dg}-bounded graph]
We say that a graph $\Gamma=(\Gamma_n)_{n\in\mathbb{N}}$ is \emph{dg-bounded} if there exists a constant $D>0$ such that  for all $n\in\mathbb N$:
$$\frac{\diam \Gamma_n}{\gi \Gamma_n}\leqslant D.$$ 
\end{defn}
If $\Gamma$ is with uniformly bounded degree, say at most $d>0$, 
then the number of vertices $\vert \Gamma_n\vert$ satisfies  $\vert \Gamma_n\vert \leqslant 1+d+\cdots+d(d-1)^{\diam \Gamma_n -1}.$ 
If, in addition,  $\Gamma$ is dg-bounded, then 
$\gi \Gamma_n \geqslant \frac{1}{D}\diam \Gamma_n \geqslant \log_{d-1}\vert \Gamma_n\vert  -\frac{2}{d}.$
Thus, a dg-bounded graph with uniformly bounded degree satisfies $\gi \Gamma_n\to \infty$ whenever  $\vert \Gamma_n\vert\to \infty$ as  $n\to\infty.$

In this paper, we consider graphs with uniformly bounded degree only.

\begin{defn}[Cheeger constant]
Given a finite connected graph $\Gamma$ with $\vert \Gamma\vert$ vertices and a subset $A\subseteq \Gamma$, denote by 
$\partial A$ the set of edges between $A$ and $\Gamma\setminus A$. 
The \emph{Cheeger constant} of $\Gamma$ is defined as $$h(\Gamma)=\min_{1\leqslant |A|\leqslant \vert \Gamma\vert/2}\frac{|\partial A|}{|A|}.$$ 
\end{defn}

\begin{defn}[Expander]\label{def:ex}
An \emph{expander} is a sequence $(\Gamma_n)_{n\in\mathbb{N}}$  of finite connected graphs with uniformly bounded degree, $\vert \Gamma_n\vert\to\infty$ as $n\to \infty$,  and
$h(\Gamma_n)\geqslant c$ uniformly over $n\in \mathbb{N}$ for some constant $c>0$.\smallskip
\end{defn}

The  next result is well-known, see, for example,~\cite[Proposition 11.29]{R}.
\begin{lem}\label{lem:ex}
No expander coarsely embeds into Hilbert space.
\end{lem}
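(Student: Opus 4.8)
The plan is to show that an expander $(\Gamma_n)$ cannot coarsely embed into Hilbert space by exploiting the tension between the spectral gap (encoded in a uniform lower bound on the Cheeger constant) and the Poincar\'e-type inequalities that any map into Hilbert space must satisfy. First I would translate the combinatorial Cheeger bound $h(\Gamma_n)\geqslant c$ into a spectral gap: by Cheeger's inequality for graphs, the first nonzero eigenvalue $\lambda_1$ of the (normalized) graph Laplacian is bounded below by a positive constant depending only on $c$ and the degree bound $d$. This yields a discrete Poincar\'e inequality of the form
\[
\sum_{x\in \Gamma_n}\bigl\| f(x)-\overline{f}\bigr\|^2 \;\leqslant\; \frac{1}{\lambda_1}\sum_{x\sim y}\bigl\| f(x)-f(y)\bigr\|^2,
\]
valid for every map $f\colon \Gamma_n\to \mathcal{H}$, where $\overline{f}$ denotes the (Hilbert-space) average of $f$ over the vertices and the right-hand sum runs over edges.

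Next I would compare the two sides of this inequality against the distance function on $\Gamma_n$, using the defining sandwich bound $\rho(d_n(x,y))\leqslant \|f_n(x)-f_n(y)\|\leqslant \gamma(d_n(x,y))$ of a coarse embedding. The right-hand side of the Poincar\'e inequality involves only \emph{adjacent} vertices, so each edge term is at most $\gamma(1)^2$; summing over the at most $\tfrac{d}{2}|\Gamma_n|$ edges gives an upper bound of order $|\Gamma_n|$. For the left-hand side I would observe that, since a fixed proportion of vertex pairs lie at distance comparable to the diameter, and since the average $\overline{f}$ cannot be close to every point, the quantity $\sum_x \|f(x)-\overline{f}\|^2$ grows at least like $|\Gamma_n|\cdot \rho(R_n)^2$ for some radius $R_n\to\infty$. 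Combining these estimates forces $\rho(R_n)^2 \lesssim \gamma(1)^2/\lambda_1$, i.e. a bound independent of $n$, contradicting the properness of $\rho$ once $R_n\to\infty$.

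The main obstacle is making the lower bound on the left-hand side rigorous: I must guarantee that a definite fraction of the mass of $\sum_x \|f(x)-\overline{f}\|^2$ comes from vertices genuinely far from the average, with the relevant radius tending to infinity. The clean way to do this is to rewrite the left-hand side using the identity
\[
\sum_{x\in\Gamma_n}\bigl\| f(x)-\overline{f}\bigr\|^2 \;=\; \frac{1}{2|\Gamma_n|}\sum_{x,y\in\Gamma_n}\bigl\| f(x)-f(y)\bigr\|^2,
\]
which converts the variance into an average over \emph{all} pairs and thus lets me invoke $\rho$ directly. A counting argument then shows that in an expander a uniformly positive fraction of pairs $(x,y)$ satisfy $d_n(x,y)\geqslant R_n$ with $R_n\to\infty$ (this is where large girth or, more simply, $|\Gamma_n|\to\infty$ with bounded degree forces the typical distance to diverge), so the pair-average is at least $c'\,\rho(R_n)^2$. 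Feeding this into the Poincar\'e inequality yields $\rho(R_n)^2\leqslant \gamma(1)^2/\lambda_1$, and since $\rho$ is proper while $R_n\to\infty$, this is the desired contradiction. Because this is a standard fact, in the write-up I would simply cite the stated reference and sketch only the Poincar\'e-versus-diameter comparison.
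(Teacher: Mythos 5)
Your argument is correct and is exactly the standard Poincar\'e-inequality proof that the paper invokes by citation (Roe, Proposition 11.29) rather than writing out: spectral gap from the Cheeger bound, variance identity $\sum_x\|f(x)-\overline{f}\|^2=\frac{1}{2|\Gamma_n|}\sum_{x,y}\|f(x)-f(y)\|^2$, the edge sum bounded by $\gamma(1)^2$ times $O(|\Gamma_n|)$, and the bounded-degree counting argument showing most pairs are at distance $R_n\to\infty$, contradicting properness of $\rho$. No gaps; the hand-wavy phrasing in your middle paragraph is properly repaired by the variance identity and counting step that follow.
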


Here is a celebrated example of a dg-bounded expander~\cite{Ma,LPS}.

\begin{ex}[dg-bounded expander]\label{ex:lps}
The Cayley graph $X^{p,q}$ of the projective general linear group $PGL_2(q)$ over the field of $q$ elements
for a particular set  of $(p + 1)$ generators, where $p$ and $q$ are distinct primes
congruent to 1 modulo 4 with the Legendre symbol $\left(\frac{p}{q}\right) = -1$, satisfies the following:
\begin{itemize}
\item[(1)] $X^{p,q}$ is $(p+1)$ regular on $N=q(q^2-1)$ vertices;
\item[(2)] $\gi X^{p,q}\geqslant  4\log_pq-\log_p4$;
\item[(3)] $(X^{p,q})_q$ is a family of Ramanujan graphs.
\end{itemize}
For a fixed prime $p$ and each $q$ as above, we set $\Gamma_q= X^{p,q}$. Then $(\Gamma_q)_q$ is an expander and
$\gi \Gamma_q\to\infty$ as $q\to\infty$. The expander mixing lemma ensures that $\Gamma_q$ is of diameter
$O(\log N)$. Thus, $(\Gamma_q)_q$ is a dg-bounded expander. Moreover, it is a dg-bounded expander whose components
are Cayley graphs.
\end{ex}

In contrast to expanders, are sequences of \emph{graphs with walls}: graphs where every edge belongs to a wall.
A \emph{wall} (or a \emph{cut}) in a connected graph is a collection of edges such that removing all open edges of the collection decomposes the graph into exactly two connected components. 

A distance between two vertices in the \emph{wall pseudo-metric} on a graph with walls is the number of walls separating these vertices.
Elementary examples of graphs with walls are: a tree and the 1-skeleton of the square tiling of the plane,
with the wall pseudo-metric being the edge-length metric.

The following result is well-known, see, for example,~\cite[Proposition 4.3]{AGS}.
\begin{lem}
Let $\Gamma=(\Gamma_n)_{n\in\mathbb{N}},$ where each $\Gamma_n$ be a graph with walls.
Then, viewed with respect to the wall pseudo-metric, $\Gamma=(\Gamma_n)_{n\in\mathbb{N}}$ admits a coarse embedding into Hilbert space.
\end{lem}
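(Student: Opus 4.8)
The plan is to embed each $\Gamma_n$ (with the wall pseudo-metric) into a Hilbert space using the walls themselves as coordinates, exploiting the elementary fact that a space of walls gives rise to a natural wall metric which is isometric to a subspace of an $\ell^1$-space, and hence coarsely embeds into Hilbert space. Concretely, I would first fix one connected component $\Gamma_n$ and let $\mathcal{W}_n$ denote its set of walls. Since each wall $w$ partitions the vertices of $\Gamma_n$ into two halves, I would choose for each wall an orientation and define, for each vertex $x$, a characteristic-type function recording on which side of each wall the vertex lies. The key point is that the number of walls separating two vertices $x,y$ equals the number of coordinates in which these functions differ, so the wall pseudo-metric is exactly a Hamming-type (hence $\ell^1$) distance.

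The key steps, in order, are as follows. First, I would build the natural map $\phi_n\colon \Gamma_n \to \ell^1(\mathcal{W}_n)$ by sending a vertex $x$ to the indicator vector of the set of walls for which $x$ lies on the chosen positive side (after fixing a basepoint, one can center these so that distances are what we want). This yields $\|\phi_n(x)-\phi_n(y)\|_1 = \#\{w : w \text{ separates } x \text{ and } y\}$, which is precisely the wall pseudo-metric distance $d_n(x,y)$. Second, I would invoke the standard Schoenberg-type fact that the metric space $(\ell^1, \|\cdot\|_1)$, equipped with the square-root metric $\sqrt{\|\cdot\|_1}$, embeds isometrically into a Hilbert space; equivalently, the function $d \mapsto \sqrt{d}$ applied to an $\ell^1$-distance is a Hilbertian (negative-type) metric. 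Composing these, the maps $f_n\colon \Gamma_n \to \mathcal{H}$ satisfy $\|f_n(x)-f_n(y)\|_{\mathcal{H}} = \sqrt{d_n(x,y)}$. Third, I would check the two required bounds of the coarse embedding definition uniformly over $n$: taking $\rho(t)=\gamma(t)=\sqrt{t}$ gives
$$
\rho(d_n(x,y)) \leqslant \|f_n(x)-f_n(y)\|_{\mathcal{H}} \leqslant \gamma(d_n(x,y)),
$$
and since $\sqrt{\cdot}$ is a proper function independent of $n$, this is exactly a coarse embedding of the whole sequence $(\Gamma_n)_{n\in\mathbb{N}}$ into a single Hilbert space (one takes the Hilbertian direct sum over $n$, placing the image of $\Gamma_n$ in orthogonal summands).

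The main obstacle, and the point that needs genuine care rather than routine bookkeeping, is the \emph{uniformity} of the embedding across the whole sequence. For a single graph any injective map works; the content of the statement is that the distortion functions $\rho,\gamma$ can be chosen independently of $n$. This is precisely what the wall structure buys: because each component is assembled from walls and the distance is counted in walls, the square-root estimate is identical for every $n$, with no hidden dependence on $|\Gamma_n|$ or on the degree. I would emphasize this by noting that the direct-sum construction places the images in pairwise orthogonal subspaces, so that cross-component distances are automatically at least as large as intra-component ones and do not spoil the lower bound $\rho$. The remaining verifications — that $\sqrt{\cdot}$ is proper, that the $\ell^1$-to-Hilbert passage is isometric onto its image, and that the indicator vectors are finitely supported when the graphs are locally finite so that everything lands in an honest Hilbert space — are standard and I would relegate them to a citation of the wall-space literature, as the statement already references~\cite[Proposition 4.3]{AGS}.
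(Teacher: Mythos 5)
Your argument is correct and is precisely the standard proof that the paper delegates to \cite[Proposition 4.3]{AGS} rather than writing out: the wall pseudo-metric is an $\ell^1$ (Hamming) distance on the indicator vectors of the separating walls, and the square root of an $\ell^1$ metric is Hilbertian, which gives the control functions $\rho=\gamma=\sqrt{t}$ uniformly in $n$. Two cosmetic remarks: you can bypass the appeal to Schoenberg's theorem entirely by noting that the same $0$--$1$ indicator vectors already lie in $\ell^2(\mathcal{W}_n)$, where their mutual distances are exactly $\sqrt{d_n(x,y)}$; and the finite support of these vectors follows from the defining property of a space with walls (any two points are separated by finitely many walls --- here automatic, since every wall separating $x$ from $y$ must contain an edge of any path joining them) rather than from local finiteness per se.
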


Graphs with walls are examples of spaces with walls. For a set $X$ and a family $\mathcal W$ of partitions, called \emph{walls}, of $X$ into two parts, 
the pair $(X, \mathcal W)$ is a \emph{space with walls} if every two distinct points of $X$ are separated by finitely many walls, i.e. the distance between distinct points in the
wall-pseudo metric is finite. A group acting properly isometrically on a space with walls satisfies the Haagerup property,
see e.g.~\cite[Section 1.2.7]{CCJ}.

A natural way to produce a graph with walls is to take the $\mathbb Z / 2\mathbb Z$-homology covering of a \emph{2-connected} graph (i.e. a graph where removing an edge does not disconnect the graph).
Such a countably successive covering of the figure eight graph (or of a bouquet of finitely many cycle graphs) yields a regular \emph{graph} which is coarsely embeddable into Hilbert space
but is not coarsely amenable~\cite{AGS}. One can also take the $\mathbb Z / 2\mathbb Z$-homology covering of each component of a given sequence $\Gamma=(\Gamma_n)_{n\in\mathbb{N}}$, cf.~\cite[Example 3 and below]{AO}.

\begin{ex}[$\mathbb Z/2\mathbb Z$-homology covering of a dg-bounded graph]
Let $\Gamma=(\Gamma_n)_{n\in\mathbb{N}}$ be a dg-bounded graph such that each component $\Gamma_n$ is a 2-connected graph. Let 
$\widetilde\Gamma=(\widetilde\Gamma_n)_{n\in\mathbb{N}}$ be the \emph{$\mathbb Z / 2\mathbb Z$-homology covering} of $\Gamma$:
each $\widetilde\Gamma_n$ is the regular covering of $\Gamma_n$ whose the group of deck transformations is the 
$\rm{rank}(\pi_1(\Gamma_n))$-fold direct sum of $\mathbb Z/2\mathbb Z$'s.
Then each $\widetilde \Gamma_n$ is the graph with walls~\cite[Lemma 3.3]{AGS}.
For each edge $e\in E(\Gamma_n)$ and the covering map $p\colon \widetilde\Gamma_n\to \Gamma_n$,
the wall $w_e$ is defined by $w_e=p^{-1}(e)\subseteq E(\widetilde\Gamma_n)$
and $\{w_e \mid e \in E(\Gamma_n)\}$ provides the \emph{wall structure} on $\widetilde \Gamma_n$ (meaning that each edge  is contained in exactly one wall).
\end{ex}

\subsection{Infinite coarse monster groups}
The dg-bounded graphs play a significant role in recent constructions of infinite \emph{groups} containing
prescribed graphs in their Cayley graphs. Indeed, dg-bounded graphs are particularly suited
to carry labelings with various  small cancellation conditions. Such labelings guarantee, in particular, 
the injectivity of the natural label-preserving graph morphism
$$
\Gamma_0 \to Cay(G(\Gamma),S),
$$
for each component $\Gamma_0$ of an $S$-labelled graph $\Gamma$,
after choosing a base vertex in $\Gamma_0$ and its image in the Cayley graph  $Cay(G(\Gamma),S)$
of the group it defines. This, together with the requirements of a given small cancellation condition, 
allows to induce desirable  coarse properties of the group $G(\Gamma)$ from those of the graph $\Gamma$.
Here is a brief outline of two constructions of this type.

Gromov has introduced the \emph{geometric small cancellation condition}~\cite[Section 2]{Gr_rw}, see also~\cite[Section 3]{AD}.
A labeling of a dg-bounded expander $\Gamma=(\Gamma_n)_{n\in\mathbb{N}}$ by a finite set of labels, chosen uniformly at random, satisfies the condition. 
This yields an almost quasi-isometric embedding of the dg-bounded expander into 
the Cayley graph $Cay(G(\Gamma),S)$ of the group it defines, and, also using Lemma~\ref{lem:ex},
\emph{Gromov's monster}: a finitely generated group admitting no coarse embeddings into Hilbert space~\cite{Gr_rw,AD}.

Arzhantseva-Osajda have introduced the \emph{lacunary walling condition}~\cite[Definition 4.1]{AO} on graphs with walls $\Lambda=(\Lambda_n)_{n\in\mathbb{N}}$, which includes the \emph{$C'(\overline \lambda)$-small cancellation condition} on the labeling. Under even a weaker variant of this small cancellation condition (cf. Definition~\ref{def:sc} below), they observe that
each component $\Lambda_0$ of $\Lambda$  embeds isometrically into the Cayley graph $Cay(G(\Lambda),S)$~\cite[Lemma 2.1]{AO}. Using the lacunary walling condition, they extend walls from each
component $\Lambda_n$ of $\Lambda=(\Lambda_n)_{n\in\mathbb{N}}$
to $Cay(G(\Lambda),S)$, prove that the wall pseudo-metric on $Cay(G(\Lambda),S)$ is bi-Lipschitz equivalent to the graph metric, and,
hence, in particular, the action of $G(\Lambda)$ on the resulting space with walls is proper~\cite[Theorem 1.1, Theorem 4.7, and Theorem 5.1]{AO}.
Applied to a suitable graph, this gives the existence of the \emph{Haagerup monster}\footnote{This terminology is to emphasize
the eccentricity of this group with the Haagerup property.}: a finitely generated group with the Haagerup property, which is not coarsely amenable.

\begin{thm}\cite[Theorem 1.2 and Theorem 5.1]{AO}\label{thm:AO}
Let $\Lambda=(\Lambda_n)_{n\in\mathbb{N}}$ be a uniformly bounded degree graph with all vertex degrees at least 3, satisfying the lacunary walling condition.
Then $G(\Lambda)$ acts properly on a space with walls (equivalently, acts properly on a CAT(0) cubical complex) but is not coarsely amenable.
In particular, $G(\Lambda)$ has the Haagerup property but is not coarsely amenable, i.e. $G(\Lambda)$ is the Haagerup monster.
\end{thm}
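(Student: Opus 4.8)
The plan is to treat the two assertions separately --- properness of a natural wall action (which will deliver the Haagerup property) and the failure of coarse amenability --- using as common input the isometric embedding of each component $\Lambda_0\hookrightarrow Cay(G(\Lambda),S)$ furnished by the $C'(\overline\lambda)$ small cancellation part of the lacunary walling condition (\cite[Lemma 2.1]{AO}).

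For the Haagerup property, the first step is to equip $Cay(G(\Lambda),S)$ with a wall structure. Each edge of a component $\Lambda_n$ already lies in a unique wall $w_e$ coming from its $\Z/2\Z$-homology cover; the idea is to propagate each $w_e$ across the small cancellation gluings into a genuine two-sided partition of the entire Cayley graph, and to do so $G(\Lambda)$-equivariantly. The lacunarity hypothesis --- the rapid growth of the girths together with the sparse, well-separated way the pieces $\Lambda_n$ sit inside the Cayley graph --- is precisely what prevents these extensions from accumulating, so that every edge lies in only finitely many walls and $(Cay(G(\Lambda),S),\mathcal W)$ is a bona fide space with walls carrying an isometric $G(\Lambda)$-action.

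Next I would compare the wall pseudo-metric $d_{\mathcal W}$ with the word metric $d_S$. The bound $d_{\mathcal W}\leqslant d_S$ up to a multiplicative constant is immediate, since crossing a single edge of the Cayley graph flips the side of only boundedly many walls. Granting the reverse estimate, hence the bi-Lipschitz equivalence $d_{\mathcal W}\asymp d_S$, the metrically proper action of $G(\Lambda)$ on $(Cay(G(\Lambda),S),d_S)$ transfers to a metrically proper action on the space with walls; by the criterion recalled in Section~\ref{sec:graphical} this yields the Haagerup property, and the Chatterji--Niblo/Nica correspondence upgrades the proper wall action to a proper action on a CAT(0) cubical complex. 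The reverse estimate is the decisive point, and the main obstacle of the whole argument: one must show that every word-geodesic is separated by a number of walls proportional to its length, which amounts to controlling how the extended walls interact across the small cancellation pieces. This combinatorial heart is exactly why the lacunary (rapidly spaced) walling hypothesis is imposed.

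For the failure of coarse amenability, I would use that coarse amenability is inherited under coarse embeddings into a coarsely amenable space, so that, via the isometric embedding $\Lambda_n\hookrightarrow Cay(G(\Lambda),S)$, it suffices to show that $\Lambda=(\Lambda_n)_n$ is itself not coarsely amenable. This is precisely the point of the homology-cover construction recalled in Section~\ref{sec:graphical}: the resulting graphs with walls (of uniformly bounded degree, all degrees at least $3$) coarsely embed into Hilbert space through their walls yet are not coarsely amenable by \cite{AGS}. Consequently $G(\Lambda)$ cannot be coarsely amenable either, which, together with the Haagerup property established above, identifies $G(\Lambda)$ as the Haagerup monster.
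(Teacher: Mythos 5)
This theorem is imported from \cite{AO}; the paper offers no proof of its own beyond the citation, and your sketch faithfully reproduces the strategy that the paper itself summarizes in the paragraphs preceding the statement (isometric embedding of each component via the small cancellation part of the hypothesis, equivariant extension of the walls $w_e$ to $Cay(G(\Lambda),S)$ using lacunarity, bi-Lipschitz comparison of the wall pseudo-metric with the word metric, hence a proper action on a space with walls and the Haagerup property, plus failure of coarse amenability pulled back from the embedded bounded-degree, large-girth, minimum-degree-$3$ components via \cite{AGS}). The only caveat is that you explicitly black-box the lower bound $d_S\leqslant C\, d_{\mathcal W}$, which is the entire technical content of \cite{AO}, so what you have is an accurate roadmap rather than a self-contained proof --- but that is precisely the level of detail at which the present paper treats this cited result.
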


A concrete example of a graph $\Lambda=(\Lambda_n)_{n\in\mathbb{N}}$ with the lacunary walling condition is constructed by taking  the $\mathbb Z / 2\mathbb Z$-homology covering of a 
dg-bounded graph $\Gamma=(\Gamma_n)_{n\in\mathbb{N}}$ with $\gi \Gamma_n\to\infty$ as $n\to\infty,$
under the assumption that there exists a labeling of $\Gamma$ with the $C'(\overline \lambda)$-small cancellation condition~\cite[Definition 4.1 and Section 7]{AO}.

Here is a familiar weaker variant of the $C'(\overline \lambda)$-small cancellation condition, see, for instance,~\cite[Section 2]{AO}. A path is 
 a \emph{piece} in an $S$-labeled graph $\Gamma=(\Gamma_n)_{n\in\mathbb{N}}$ if 
 there are $\Gamma_n$ and $\Gamma_m$ containing this path
in two essentially distinct ways (i.e. not equal by a label-preserving graph isomorphism $\Gamma_n\to \Gamma_{m}$).

\begin{defn}[$C'(\lambda)$-small cancellation condition]\label{def:sc}
Given $\lambda\in (0,1),$
an $S$-labeled graph $\Gamma=(\Gamma_n)_{n\in\mathbb{N}}$ satisfies the \emph{$C'(\lambda)$-small cancellation condition}
if the labelling is reduced and every piece appearing in $\Gamma_n$ has length strictly less than $\lambda \gi \Gamma_n$.
\end{defn}

The next lemma is now standard, see, for instance,~\cite[Lemma 2.1]{AO}, where $\lambda\leqslant 1/24$ since the setting is more general; cf.~\cite[Theorem 1, (6)]{Oll}, where $\Gamma$ is assumed to be finite and~\cite[Section 3.2]{Dom}, where a weaker requirement on the length of pieces is explored.

\begin{lem}\label{lem:isom}
Let $\Gamma=(\Gamma_n)_{n\in\mathbb{N}}$ be an $S$-labeled graph with the $C'(\lambda)$-small cancellation condition for $\lambda\leqslant 1/6.$
Then, for each component $\Gamma_0$ of $\Gamma$, the label-preserving graph morphism
$
\Gamma_0 \to Cay(G(\Gamma),S)
$
is an isometric embedding.
\end{lem}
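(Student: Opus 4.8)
The plan is to reduce the statement to the single assertion that the label of any edge-geodesic in $\Gamma_0$ is a geodesic word in $G(\Gamma)$, and then to prove that assertion by a van Kampen diagram argument controlled by the small cancellation hypothesis. First I would record that the map $f\colon\Gamma_0\to Cay(G(\Gamma),S)$ is well defined once a base vertex is fixed: two paths in $\Gamma_0$ with the same endpoints differ by a closed path, and the label of any closed path represents the identity in $G(\Gamma)$, since it decomposes into labels of simple closed paths (Definition~\ref{def:group}). As $f$ sends each edge labelled $s$ to an edge $\{g,gs\}$, it is a graph morphism, hence $1$-Lipschitz, so $d_{Cay}(f(u),f(v))\leqslant d_{\Gamma_0}(u,v)$ for all $u,v\in\Gamma_0$. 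Both injectivity and the reverse inequality will then follow once I show that $f$ does not strictly shrink distances.

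Fix $u,v\in\Gamma_0$ and let $\alpha$ be an edge-geodesic in $\Gamma_0$ from $u$ to $v$, with label $w_\alpha$; since the labelling is reduced and $\alpha$ is a reduced path, $w_\alpha$ is freely reduced and $|w_\alpha|=d_{\Gamma_0}(u,v)$. It suffices to prove that $w_\alpha$ is geodesic in $G(\Gamma)$. Suppose not, and choose a geodesic $\beta$ in $Cay(G(\Gamma),S)$ from $f(u)$ to $f(v)$ with freely reduced label $w_\beta$ of length $|w_\beta|<|w_\alpha|$. Then $w_\alpha w_\beta^{-1}$ represents the identity, and since it does not reduce to the empty word (that would force $|w_\alpha|=|w_\beta|$), after cyclic reduction at the two junction vertices it bounds a reduced van Kampen diagram $D$ over $\langle S\mid R\rangle$ containing at least one $2$-cell. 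Each $2$-cell $\Pi$ of $D$ reads a relator, that is, the label of a simple closed path $c$ in some component $\Gamma_m$, of length $|\partial\Pi|=|\ell(c)|\geqslant\gi\Gamma_m$.

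Now I would invoke Greendlinger's Lemma for $C'(\lambda)$-presentations with $\lambda\leqslant 1/6$: $D$ contains a $2$-cell $\Pi$ whose intersection with $\partial D$ is a single outer arc $s$ with $|s|>(1-3\lambda)|\partial\Pi|\geqslant\tfrac12|\partial\Pi|$. In particular $|s|>\tfrac12|\ell(c)|\geqslant\tfrac12\gi\Gamma_m>\lambda\,\gi\Gamma_m$, so $s$ is longer than any piece appearing in $\Gamma_m$ and hence is \emph{not} a piece. Writing $\ell(c)=\ell(s)\ell(s')$ with $s'$ the complementary arc, we get $|s'|<|s|$ and $\ell(s)=_{G}\ell(s')^{-1}$. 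Two cases arise, depending on which side of $\partial D=w_\alpha w_\beta^{-1}$ the arc $s$ lies on. If $s$ lies on the $\beta$-side, then $s$ is a subword of the geodesic word $w_\beta$, yet $\ell(s)$ is represented by the strictly shorter word $\ell(s')^{-1}$, contradicting that subwords of geodesics are geodesic. If $s$ lies on the $\alpha$-side, then the subpath $s$ occurs both along $\alpha\subseteq\Gamma_0$ and along $c\subseteq\Gamma_m$; as $s$ is not a piece, these two occurrences are matched by a label-preserving isomorphism, which carries $c$ into $\Gamma_0$. Replacing the subpath $s$ of $\alpha$ by the image of the complementary arc $s'^{-1}$ then produces a path in $\Gamma_0$ with the same endpoints as $\alpha$ but strictly shorter, contradicting geodesity of $\alpha$. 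Either way we reach a contradiction, so $w_\alpha$ is geodesic and $f$ is an isometric embedding.

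The hard part will be the \emph{corner case}, where the distinguished outer arc $s$ straddles one of the junction vertices $f(u),f(v)$ and so lies partly on $\alpha$ and partly on $\beta$, leaving neither argument above directly applicable. I would handle this by appealing to the finer structure theory of such bigon diagrams—the stronger form of Greendlinger's Lemma producing several distinguished $2$-cells, or Strebel's classification of reduced $C'(1/6)$-diagrams whose boundary splits into two geodesic arcs: since the boundary has only two corners, one obtains a distinguished $2$-cell whose outer arc lies in the interior of $\alpha$ or of $\beta$, which returns us to the two pure cases. The auxiliary points—that $D$ may be taken reduced and that cyclic reduction at the corners does not affect the relevant lengths—are routine.
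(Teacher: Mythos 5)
Your overall strategy---reduce to showing that labels of geodesics of $\Gamma_0$ are geodesic words in $G(\Gamma)$, then run a Greendlinger-type argument on a bigon diagram, with the dichotomy ``distinguished arc on the $\beta$-side contradicts geodesity in the group / distinguished arc on the $\alpha$-side pulls the relator back into $\Gamma_0$ and shortcuts $\alpha$''---is the right one, and it is essentially what the references cited for this lemma (\cite[Lemma 2.1]{AO}, \cite{Oll}, \cite{Dom}) carry out; the paper itself gives no proof. But there is a genuine gap at the central step: you invoke the \emph{classical} Greendlinger Lemma for $C'(\lambda)$-presentations, and the presentation $\langle S\mid R\rangle$ of Definition~\ref{def:group} is \emph{not} a classical $C'(\lambda)$-presentation. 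The pieces of Definition~\ref{def:sc} only control overlaps between \emph{essentially distinct} occurrences of a path; two distinct simple closed paths inside the \emph{same} component $\Gamma_m$ (of which there are many, e.g.\ in a Cayley graph or a homology cover) give two distinct relators in $R$ that can share a common arc of length comparable to $\gi\Gamma_m$, and such an arc is not a piece. Consequently a reduced van Kampen diagram over $\langle S\mid R\rangle$ can have adjacent $2$-cells meeting along long arcs that violate the metric hypothesis of Greendlinger's Lemma, and the conclusion ``there is a $2$-cell with an outer boundary arc of length $>(1-3\lambda)|\partial\Pi|$'' is unjustified as stated.

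The known repair is exactly the extra layer of graphical small cancellation machinery you skipped: one first passes from $D$ to a diagram ``reduced over $\Gamma$'' by merging every maximal union of $2$-cells whose lifts land in the same copy of a component $\Gamma_m$ into a single face, checks that the boundary word of each merged face is the label of a non-trivial reduced closed path in $\Gamma_m$ (hence of length at least $\gi\Gamma_m$), and that arcs separating \emph{distinct} faces of the merged diagram are genuine pieces in the sense of Definition~\ref{def:sc}; only then does a Greendlinger/curvature count apply (this is \cite[Theorem 1]{Oll} and \cite[Section 3.2]{Dom}). After that reduction your two cases, including the $\alpha$-side pullback via a label-preserving isomorphism $\Gamma_0\to\Gamma_m$, go through. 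Your handling of the corner case also needs care for the same reason, and additionally because Strebel's classification presupposes both boundary arcs are geodesic in the group, whereas $\alpha$ is at that point only known to be geodesic in $\Gamma_0$; the standard way out is the strong form of the (graphical) Greendlinger Lemma producing two distinguished faces, at least one of whose outer arcs avoids a corner up to a controlled amount. As written, the proof is incomplete at its key quantitative step.
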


The following result was proved through probabilistic arguments by Osajda.

\begin{thm}\cite[Theorem 1]{Os}\label{thm:os}
Let  $\Gamma=(\Gamma_n)_{n\in\mathbb{N}}$ be a uniformly bounded degree dg-bounded graph with $\gi \Gamma_n\to\infty$ as $n\to\infty.$
Then, for every $\lambda>0$, there exists a $C'(\lambda)$-small cancellation labeling of a subsequence
 $(\Gamma_{n_k})_{k\in\mathbb{N}}$ of $\Gamma,$ over a finite set of labels\footnote{The probabilistic argument requires that $\lambda\gi\Gamma_n>1$ and $\gi\Gamma_n<\gi\Gamma_{n+1}$, hence passing to a subsequence. Many subsequences (and possibly $\Gamma$ itself) satisfy this mild assumption.}.
\end{thm}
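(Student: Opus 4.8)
The plan is to use the probabilistic method. I would fix a finite alphabet $S$ of size $k$, to be chosen large in terms of the degree bound $d$, the dg-constant $D$ and $\lambda$, and label the edges of $\Gamma$ independently and uniformly, imposing the local rule that the labels of the edges issuing from a common vertex be pairwise distinct; since the degrees are uniformly bounded, such labelings exist once $k\geqslant d$, and they are automatically reduced, so the map from each pointed component to $\mathrm{Cay}(G(\Gamma),S)$ is an immersion. After discarding the components with $\lambda\gi\Gamma_n\leqslant 1$ and passing to a subsequence, I may assume that $\gi\Gamma_n$ is strictly increasing. The goal is then to show that, with positive probability, no piece of length $\geqslant\lambda\gi\Gamma_n$ appears in $\Gamma_n$, for every $n$.

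First I would bound the pieces internal to a single component $\Gamma_n$, that is, pairs of essentially distinct reduced paths of some common length $\ell\geqslant\lambda\gi\Gamma_n$ carrying the same label; by local injectivity the two paths must start at distinct vertices. Counting such pairs uses all three standing hypotheses: bounded degree gives at most $|\Gamma_n|(d-1)^{\ell}$ reduced paths of length $\ell$; dg-boundedness gives $|\Gamma_n|\leqslant d^{D\gi\Gamma_n+1}$; and the large girth forces short paths to be embedded geodesics, so that two distinct equally labelled paths of length $\ell$ are edge-disjoint along a definite proportion of their length, whence the probability that they share a label is at most $q^{-c\ell}$ for constants $q=q(k)\to\infty$ and $c>0$. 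Choosing $k$ large makes the resulting first-moment estimate at most $r^{\gi\Gamma_n}$ with $r<1$, and since $\gi\Gamma_n\to\infty$ along the subsequence the sum over $n$ is finite; after deleting finitely many more components the expected number of internal pieces drops below $1$, so internal pieces are absent with positive probability.

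The crux is the control of pieces shared between two \emph{different} components. A piece common to $\Gamma_n$ and $\Gamma_m$ with $\gi\Gamma_n\leqslant\gi\Gamma_m$ must have length $<\lambda\gi\Gamma_n$, so the binding threshold is the smaller girth; and here the naive union bound fails, because summing the matching probability over the infinitely many larger components contributes a factor proportional to $|\Gamma_m|$ for each $m>n$ and diverges. This reflects a genuine phenomenon: a single large component, labelled uniformly at random, realises all short words and therefore shares a critical-length word with every fixed smaller component. To bypass this I would replace the union bound by the Lovász Local Lemma, whose hypotheses involve only the per-event probability — exponentially small in the piece length, as above — and the \emph{local} dependency of the piece-events, and not the total number of such events; equivalently, I would run the construction inductively, so that at stage $n$ the critical-length vocabulary of $\Gamma_1,\dots,\Gamma_{n-1}$ is a fixed finite set containing boundedly many words of each length, and ask the Local Lemma to produce a reduced labelling of $\Gamma_n$ that is free of internal pieces and avoids this vocabulary. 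The step I expect to be the main obstacle is precisely the verification of the Local Lemma inequality with a single alphabet size valid throughout: one must bound the dependency degree of a piece-event in terms of $d$ and the piece length, and bound the number of forbidden words of each length by the sizes of the earlier (finitely many) components, using the girth to keep overlaps — and hence correlations between events — under control. Once these combinatorial bounds are in place, a fixed $k=k(d,D,\lambda)$ makes the Local Lemma applicable, yielding the desired $C'(\lambda)$-labelling of the subsequence.
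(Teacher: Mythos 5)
First, a point of reference: the paper does not prove this statement. It is imported from Osajda \cite{Os} and the surrounding text only records that it ``was proved through probabilistic arguments''. So the comparison is with the cited proof, whose declared main tool is precisely the Lov\'asz Local Lemma. Your outline matches that strategy: a random reduced labelling over a large fixed alphabet, with the Local Lemma replacing the union bound, and with cross-component pieces correctly identified as the genuine obstruction. You are right that a single large component almost surely realises every short word, so the first-moment method is irrecoverably dead for external pieces, and right that the binding length scale for a piece shared by $\Gamma_n$ and $\Gamma_m$ is the \emph{smaller} girth.

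That said, what you have written is a plan rather than a proof: the decisive step --- verifying the Local Lemma inequality with an alphabet size depending only on the degree bound $d$, the dg-constant $D$ and $\lambda$ --- is announced, not carried out, and it is where essentially all the work lies. Three points need repair or completion. (i) The claim that two distinct equally labelled paths are ``edge-disjoint along a definite proportion of their length'' is false as stated: two paths can share all but one edge (e.g.\ shifts of one another). What is true, and requires the overlap analysis in which the girth hypothesis is actually consumed, is that the coincidence constraints still leave enough free edges that the matching probability is at most $q^{-c\ell}$. (ii) For the pair-events governing internal pieces, the dependency degree is \emph{not} bounded in terms of $d$ and $\ell$ alone: the second path of an interfering pair ranges over all of $\Gamma_n$, so the degree carries a factor $|\Gamma_n|\leqslant d^{\,D\gi\Gamma_n+1}$, and it is exactly dg-boundedness that lets $q^{c\lambda\gi\Gamma_n}$ absorb it; the same uniform bound $|\Gamma_m|^{1/\gi\Gamma_m}\leqslant d^{\,D+1}$ is what allows one alphabet to handle the forbidden vocabulary of every earlier component simultaneously. (iii) Your random model is not a product measure: conditioning i.i.d.\ edge labels on local injectivity destroys the independence required by the variable form of the Local Lemma, so you must either add the local collisions as further bad events over genuinely independent labels, or invoke a lopsided/general version. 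None of these is fatal --- the cited paper closes all of them --- but as written the proposal stops at the threshold of the actual argument.
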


Applied to a dg-bounded expander, this theorem gives, via Lemma~\ref{lem:isom}, the existence of the \emph{special Gromov monster}: 
a finitely generated group which contains in its Cayley graph an isometrically embedded expander~\cite[Theorem 4]{Os}.

Albeit the $C'(\lambda)$-small cancellation condition is weaker than the above-mention\-ed $C'(\overline\lambda)$-small cancellation condition,
combined with a \emph{proper} lacunary wall\-ing condition~\cite[Definition 5.1]{Os}, it gives, following arguments of~\cite[Section 4]{AO},
the coarse equivalence between the wall pseudo-metric on $Cay(G(\Lambda),S)$ and its graph metric.
Then, as in Theorem~\ref{thm:AO},  $G(\Lambda)$ acts properly on a space with walls and, in particular,
$G(\Lambda)$ has the Haagerup property~\cite[Theorem 5.6. and Theorem 3]{Os}. 

A concrete example of a graph $\Lambda=(\Lambda_n)_{n\in\mathbb{N}}$ with the proper lacunary walling condition is constructed as in~\cite[Section 7]{AO}
by taking a finite-sheeted covering (to enlarge the girth of the base graphs) followed by the $\mathbb Z / 2\mathbb Z$-homology covering (to produce the walls)
of a  dg-bounded graph $\Gamma=(\Gamma_n)_{n\in\mathbb{N}}$ with $\gi \Gamma_n\to\infty$ as $n\to\infty$~\cite[Lemma 6.1]{Os}.
The assumption that there exists a labeling of $\Gamma$ with the $C'(\lambda)$-small cancellation condition is now guaranteed by Theorem~\ref{thm:os}.
(We do not detail the lacunary walling conditions as we do not use them below.)    

Here is the outcome of~\cite{AO,Os} summarized in a suitable form.

\begin{thm}\label{thm:exist}
Let $\Gamma=(\Gamma_n)_{n\in\mathbb{N}}$ be a uniformly bounded degree dg-bounded graph with all vertex degrees at least 3
and $\vert \Gamma_n\vert\to \infty$ as  $n\to\infty.$
Then there exist $\lambda\in (0,1/24]$, a $C'(\lambda)$-small cancellation labeling of a subsequence $(\Gamma_{n_k})_{k\in\mathbb{N}}$ of $\Gamma,$ over a finite set $S$ of labels, and  
a graph $\Lambda=(\Lambda_{k})_{k\in\mathbb{N}}$ with a finite-sheeted regular label-preserving
graph covering $p \colon (\Lambda_{k})_{k\in\mathbb{N}} \to (\Gamma_{n_k})_{k\in\mathbb{N}}$ so that
$G(\Lambda)$ is the Haagerup monster.
\end{thm}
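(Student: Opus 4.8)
The plan is to treat this statement as a repackaging of Osajda's labeling theorem (Theorem~\ref{thm:os}) together with the walling construction of \cite{AO, Os}, performing only the bookkeeping needed to exhibit the covering $p$ with the stated properties.

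First I would check that the hypotheses land us squarely in the scope of Theorem~\ref{thm:os}. Since $\Gamma=(\Gamma_n)_{n\in\N}$ is dg-bounded of uniformly bounded degree with $\vert\Gamma_n\vert\to\infty$, the estimate recorded just after the definition of dg-bounded graphs gives $\gi\Gamma_n\to\infty$. Fixing any $\lambda\in(0,1/24]$ (the bound $1/24$ being the one under which the lacunary walling machinery of \cite{AO} operates), Theorem~\ref{thm:os} then yields a subsequence $(\Gamma_{n_k})_{k\in\N}$ and a $C'(\lambda)$-small cancellation labeling of it over a finite alphabet $S$. The standing assumption that all degrees are at least $3$ enters here: for a connected graph on $V$ vertices with $E$ edges and minimal degree $3$ one has $\mathrm{rank}(\pi_1)=E-V+1\geqslant V/2+1$, so the first Betti numbers of the $\Gamma_{n_k}$ tend to infinity, which is what makes the homology covers below a genuinely growing family carrying nontrivial walls.

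Next I would realize $\Lambda$ by the two-step covering described before the statement, following \cite[Lemma 6.1]{Os} and \cite[Section 7]{AO}. I would first pass to a finite-sheeted regular covering $\Gamma'_k\to\Gamma_{n_k}$ enlarging the girth (such a cover may be taken regular), and then take the $\mathbb{Z}/2\mathbb{Z}$-homology covering $\Lambda_k\to\Gamma'_k$ to install the walls. The composite $p\colon\Lambda_k\to\Gamma_{n_k}$ is finite-sheeted for each $k$; pulling back the $S$-labeling along $p$ makes it label-preserving (reducedness is inherited since a covering is a local isomorphism); and it is regular because $\pi_1(\Lambda_k)=\Ker\bigl(\pi_1(\Gamma'_k)\to H_1(\Gamma'_k;\mathbb{Z}/2\mathbb{Z})\bigr)$ is a \emph{characteristic} subgroup of $\pi_1(\Gamma'_k)$, while $\pi_1(\Gamma'_k)\trianglelefteq\pi_1(\Gamma_{n_k})$ by regularity of the first covering; a characteristic subgroup of a normal subgroup is normal, so $\pi_1(\Lambda_k)\trianglelefteq\pi_1(\Gamma_{n_k})$ and $p$ is Galois. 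Finally, with $\Lambda=(\Lambda_k)_{k\in\N}$ satisfying the proper lacunary walling condition, the variant of Theorem~\ref{thm:AO} proved in \cite[Theorem 5.6 and Theorem 3]{Os} shows that $G(\Lambda)$ acts properly on a space with walls---hence has the Haagerup property---yet is not coarsely amenable; that is, $G(\Lambda)$ is the Haagerup monster. (Minimal degree $\geqslant 3$ is preserved under covering, so the hypothesis of that theorem holds for $\Lambda$.)

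The main obstacle is the middle step: verifying that the composite cover $\Lambda$, built from the mere $C'(\lambda)$ labeling of $\Gamma_{n_k}$, genuinely satisfies the proper lacunary walling condition---in particular that $\Lambda$ is still small cancellation relative to its own girth. The subtlety is that passing to the $\mathbb{Z}/2\mathbb{Z}$-homology cover can create pieces longer than those present downstairs, so $C'(\lambda)$ need not be inherited directly; this is precisely why the girth-enlarging regular cover $\Gamma'_k$ is interposed first, making $\gi\Lambda_k$ large enough that every piece of $\Lambda_k$---which projects under $p$ to a path of equal label in $\Gamma_{n_k}$ and can be traced back, via unique path lifting and the regularity of $p$, to short pieces downstairs---remains short relative to $\gi\Lambda_k$. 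Quantifying this interaction between the small cancellation labeling, the girth enlargement, and the resulting wall lacunarity is the technical content imported from \cite{AO, Os}; once it is in place, the Haagerup-property and non-coarse-amenability assertions are immediate from the cited theorems.
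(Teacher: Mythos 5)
Your proposal is correct and follows essentially the same route as the paper, which itself presents Theorem~\ref{thm:exist} only as a summary of \cite{AO,Os}: apply Theorem~\ref{thm:os} to get the $C'(\lambda)$ labeling of a subsequence, build $\Lambda$ as a girth-enlarging regular cover followed by the $\mathbb{Z}/2\mathbb{Z}$-homology cover (your characteristic-subgroup argument for regularity of the composite is exactly the device the paper uses in Proposition~\ref{prop:surjC}), and invoke \cite[Theorem 5.6 and Theorem 3]{Os} for the Haagerup property and non-coarse-amenability. The only slight shift of emphasis is that the degree-$\geqslant 3$ hypothesis is needed chiefly for the non-coarse-amenability conclusion of the cited theorems (not mainly for nontriviality of the homology covers), but since you verify that this hypothesis passes to $\Lambda$, nothing is lost.
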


Observe that the labeling of a covering graph induced from a reduced labeling of the base graph is reduced (e.g. by the unique lifting property of the covering spaces).  Also, the labeling from Theorem~\ref{thm:os} actually restricts the length of a wider family of pieces in $\Gamma'=(\Gamma_{n_k})_{n\in\mathbb{N}},$ that of graphs appearing in two distinct ways
(not only of paths that occur in two essentially distinct ways). Taking a covering then induces the  $C'(\lambda)$-small cancellation labeling of $\Lambda=(\Lambda_{k})_{k\in\mathbb{N}}$,
in the sense of Definition~\ref{def:sc}. This is because essentially distinct (not equal by a deck transformation) paths in $\Lambda$ project under the graph covering        
onto distinct paths in $\Gamma'$, cf. \cite[Remark 1.12]{Dom}. Then Lemma~\ref{lem:cover}, Lemma~\ref{lem:isom},  the fact that the lengths of pieces remain the same in coverings, and the preceding theorem, applied to a dg-bounded expander, give:

\begin{prop}\label{prop:surj}
Let $\Gamma=(\Gamma_n)_{n\in\mathbb{N}}$ be a dg-bounded expander, then
there exist a reduced $S$-labeling of a subsequence $\Gamma'=(\Gamma_{n_k})_{k\in\mathbb{N}}$ of $\Gamma,$ for a finite set $S,$ 
and a graph $\Lambda=(\Lambda_{k})_{k\in\mathbb{N}}$ with a finite-sheeted regular label-preserving
graph covering $p \colon (\Lambda_{k})_{k\in\mathbb{N}} \to (\Gamma_{n_k})_{k\in\mathbb{N}}$ so that:
\begin{itemize}
\item $G:=G(\Gamma')$ is the special Gromov monster;
\item $H:=G(\Lambda)$ is the Haagerup monster;
\item For each component $\Gamma_0$ of $\Gamma'$, the label-preserving graph morphism
$
\Gamma_0 \to Cay(G,S)
$
is an isometric embedding;
\item For each component $\Lambda_0$ of $\Lambda$, the label-preserving graph morphism
$
\Lambda_0 \to Cay(H,S)
$
is an isometric embedding;
\item  There is a surjective homomorphism $H\twoheadrightarrow G$ 
with the identity map $s\mapsto s$ on the generators.
\end{itemize}
\end{prop}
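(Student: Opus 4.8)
The plan is to assemble the results collected above, since essentially all of the probabilistic and combinatorial input is already packaged in Theorem~\ref{thm:exist} (which rests on Theorem~\ref{thm:os}). Thus the proof is mostly a matter of verifying hypotheses and invoking the appropriate lemma; the only genuinely delicate point will be transporting the small cancellation condition from the base graph $\Gamma'$ to its covering $\Lambda$, so that Lemma~\ref{lem:isom} can also be applied to $\Lambda$.

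First I would apply Theorem~\ref{thm:exist} to the given dg-bounded expander $\Gamma=(\Gamma_n)_{n\in\N}$. Its hypotheses hold: an expander has uniformly bounded degree and $|\Gamma_n|\to\infty$, and for the relevant dg-bounded expanders, e.g. the regular one of Example~\ref{ex:lps} with degree $p+1\geqslant 3$, all vertex degrees are at least $3$. Theorem~\ref{thm:exist} then produces a $\lambda\in(0,1/24]$, a $C'(\lambda)$-small cancellation labeling of a subsequence $\Gamma'=(\Gamma_{n_k})_{k\in\N}$ over a finite set $S$, and a finite-sheeted regular label-preserving covering $p\colon\Lambda\to\Gamma'$ with $H:=G(\Lambda)$ the Haagerup monster; this is the second bullet at once. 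Since $\lambda\leqslant 1/24\leqslant 1/6$, Lemma~\ref{lem:isom} applied to $\Gamma'$ shows that each component $\Gamma_0$ embeds isometrically into $Cay(G,S)$, where $G:=G(\Gamma')$, giving the third bullet. Moreover, a subsequence of a dg-bounded expander is again one, so $\Gamma'$ is an expander, and the isometric embedding just obtained exhibits an expander inside $Cay(G,S)$; hence $G$ is the special Gromov monster, and since a coarse embedding of $G$ would restrict to one of $\Gamma'$, Lemma~\ref{lem:ex} shows $G$ does not coarsely embed into Hilbert space. This is the first bullet.

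The main obstacle is the fourth bullet, which demands that the labeling of $\Lambda$ induced from $\Gamma'$ itself satisfy the $C'(\lambda)$-condition of Definition~\ref{def:sc}. Here I would invoke the remark recorded before the statement: the labeling furnished by Theorem~\ref{thm:os} controls a wider class of pieces in $\Gamma'$, namely arbitrary subpaths occurring in two distinct ways, not merely paths occurring in two \emph{essentially} distinct ways. The reduced labeling of $\Gamma'$ lifts to a reduced labeling of $\Lambda$ by the unique lifting property of coverings. Any piece in $\Lambda$ is a path carried in two essentially distinct (i.e. not deck-equivalent, the covering being regular) ways; its two occurrences project under the label-preserving local isometry $p$ onto two \emph{distinct} occurrences of the same labeled path in $\Gamma'$, so their common length is bounded by the wider piece bound, namely $<\lambda\,\gi\Gamma_{n_k}$. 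As $p$ preserves edge-length and a covering cannot decrease girth, we have $\gi\Lambda_k\geqslant\gi\Gamma_{n_k}$, so every piece in $\Lambda_k$ has length $<\lambda\,\gi\Lambda_k$, which is precisely the $C'(\lambda)$-condition. Lemma~\ref{lem:isom} applied to $\Lambda$ then yields the isometric embedding of each component $\Lambda_0$ into $Cay(H,S)$, establishing the fourth bullet.

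It remains only to produce the surjection. Since $p\colon\Lambda\to\Gamma'$ is a label-preserving graph covering, Lemma~\ref{lem:cover} supplies a surjective homomorphism $G(\Lambda)\twoheadrightarrow G(\Gamma')$, that is $H\twoheadrightarrow G$, acting as the identity $s\mapsto s$ on the generators. This is the fifth and final bullet, completing the proof.
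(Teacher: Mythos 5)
Your proposal is correct and follows essentially the same route as the paper: invoke Theorem~\ref{thm:exist} for the labeling, the covering and the Haagerup monster, transfer the $C'(\lambda)$-condition to $\Lambda$ via the observation that the labeling of Theorem~\ref{thm:os} controls the wider family of pieces and that essentially distinct paths in $\Lambda$ project to distinct paths in $\Gamma'$, then apply Lemma~\ref{lem:isom} to both graphs and Lemma~\ref{lem:cover} for the surjection. Your added remarks (that a covering does not decrease girth, and that the vertex-degree hypothesis of Theorem~\ref{thm:exist} must be checked) only make explicit what the paper leaves implicit.
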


If $\Gamma=(\Gamma_n)_{n\in\mathbb{N}}$ is a dg-bounded expander whose components are Cayley graphs,
then we can ensure that the components of $\Lambda=(\Lambda_{k})_{k\in\mathbb{N}}$ are Cayley graphs as well. 

\begin{prop}\label{prop:surjC}
Let $\Gamma=(\Gamma_n)_{n\in\mathbb{N}}$ be a dg-bounded expander with $\Gamma_n=Cay(G_n,S_n)$ for a group $G_n$ generated by $S_n$, then
there exist a reduced $S$-labeling of a subsequence $\Gamma'=(\Gamma_{n_k})_{k\in\mathbb{N}}$ of $\Gamma,$ for a finite set $S,$ 
and a graph $\Lambda=(\Lambda_{k})_{k\in\mathbb{N}}$ with a finite-sheeted regular label-preserving, with respect to $S$,
graph covering $p \colon (\Lambda_{k})_{k\in\mathbb{N}} \to (\Gamma_{n_k})_{k\in\mathbb{N}}$ such that all the conclusions of Proposition~\ref{prop:surj} hold and, in addition,
\begin{itemize}
\item $\Lambda_k=Cay(H_k,S_{n_k})$ for a group $H_k$ generated by $S_{n_k}$, for each $k\in\mathbb{N}$;
\item  There is a surjective homomorphism $H_k\twoheadrightarrow G_{n_k}$ 
with the identity map on the generators $S_{n_k}$;
\item The $S$-labeled graph $\Gamma_{n_k}$ (respectively, the $S$-labeled graph $\Lambda_k$) is isometric to
 $Cay(G_{n_k},S_{n_k})$ (respectively, to $Cay(H_k,S_{n_k})$).
\end{itemize}
\end{prop}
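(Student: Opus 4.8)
The plan is to re-run verbatim the construction that yields Proposition~\ref{prop:surj}, and then to observe that when the base components are Cayley graphs the two coverings involved are \emph{canonical} enough to keep the total space a Cayley graph. Fix a component $\Gamma_{n_k}=Cay(G_{n_k},S_{n_k})$ and write $F=F_{S_{n_k}}$ for the free group on $S_{n_k}$, with the tautological surjection $\phi\colon F\twoheadrightarrow G_{n_k}$ and $N=\ker\phi$. Identifying $Cay(F,S_{n_k})$ with the universal cover of $\Gamma_{n_k}$ gives $\pi_1(\Gamma_{n_k})\cong N$, and the elementary fact I will lean on is this: a connected regular covering $q\colon Y\to\Gamma_{n_k}$ is isomorphic, as an $S_{n_k}$-labeled graph, to a Cayley graph $Cay(F/M,S_{n_k})$ with $q$ induced by a surjection $F/M\twoheadrightarrow G_{n_k}$ (the identity on $S_{n_k}$) \emph{if and only if} the subgroup $M\le N$ classifying $q$ is normal not merely in $N$ but in the whole free group $F$. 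Geometrically this is exactly the condition that the free, vertex-transitive, label-preserving left translation action of $G_{n_k}$ on $\Gamma_{n_k}$ lift to $Y$; the lifts then form a group $H_k$ acting freely and vertex-transitively by label-preserving automorphisms, whence $Y=Cay(H_k,S_{n_k})$ by the labeled form of Sabidussi's characterization, with $H_k\twoheadrightarrow G_{n_k}$ the quotient by the deck group.

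With this criterion I would inspect the two coverings whose composition gives $p\colon\Lambda_k\to\Gamma_{n_k}$ in the construction behind Theorem~\ref{thm:exist} \cite{AO,Os}. First, the $\mathbb{Z}/2\mathbb{Z}$-homology covering is classified by $[N,N]N^2\le N$; since $N\trianglelefteq F$ and both squaring and commutators are preserved under conjugation, $[N,N]N^2$ is again normal in $F$, so this covering is Cayley-compatible and equals $Cay(F/[N,N]N^2,S_{n_k})$. Second, the finite-sheeted girth-enlarging covering can likewise be chosen Cayley-compatible: to force girth above a prescribed bound $L$ it suffices to kill, in some finite quotient, every nontrivial element of $N$ of length $\le L$, and since $F$ is residually finite there is a finite quotient $\psi\colon F\to Q$ separating these finitely many elements from the identity; then $M_1:=N\cap\ker\psi$ is normal in $F$, contained in $N$, of finite index in $N$, and contains no reduced word of length $\le L$. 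Composing, $M_k:=[M_1,M_1]M_1^2$ is normal in $F$ and contained in $N$, so $\Lambda_k=Cay(F/M_k,S_{n_k})$; setting $H_k:=F/M_k$ delivers the first two new bullets, with $H_k\twoheadrightarrow G_{n_k}$ the identity on $S_{n_k}$. (For a dg-bounded expander the girth already tends to infinity, so in many cases one may simply take $M_1=N$ and let $\Lambda_k$ be the homology covering of $\Gamma_{n_k}$ outright.)

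For the last bullet I would note that relabeling does not alter the underlying graph: the $S$-labeling furnished by Theorem~\ref{thm:os} lives on the same edge set as the $S_{n_k}$-Cayley structure of $\Gamma_{n_k}$, so the two carry the same edge-length metric and $\Gamma_{n_k}$ is literally isometric to $Cay(G_{n_k},S_{n_k})$; the induced $S$-labeling on $\Lambda_k$ is pulled back through the very same covering $p$, which is therefore simultaneously $S$- and $S_{n_k}$-label-preserving, so $\Lambda_k$ is isometric to $Cay(H_k,S_{n_k})$ as well. Every conclusion of Proposition~\ref{prop:surj} then persists unchanged, because the covers just exhibited are precisely the ones used there, only now recognized as Cayley graphs; in particular $H=G(\Lambda)$ is still the Haagerup monster.

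The main obstacle, and the only place the hypothesis that the $\Gamma_n$ are Cayley graphs is genuinely used, is the girth-enlarging step: a priori such a covering is merely normal over $\Gamma_{n_k}$ and need not be normal over the ambient free group, so the left $G_{n_k}$-action need not lift and $\Lambda_k$ need not be a Cayley graph. The point to get right is that residual finiteness of $F$ lets one realize the girth increase by an $F$-normal subgroup $M_1$ while keeping $M_1\subseteq N$ and $[N:M_1]<\infty$, and, crucially, that the girth and lacunarity of the resulting $\Lambda_k$ stay large enough for the $C'(\lambda)$-small cancellation and (proper) lacunary walling conditions of Theorem~\ref{thm:exist} to remain in force. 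Since $L$ may be chosen arbitrarily large, these quantitative requirements are met exactly as in the non-Cayley construction, and no further estimate is needed.
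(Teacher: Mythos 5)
Your proof is correct and, for most of the statement, runs along the same lines as the paper's: both arguments reduce the first bullet to the observation that a connected cover of $\Gamma_{n_k}=Cay(G_{n_k},S_{n_k})$ is again a Cayley graph over $S_{n_k}$ (with a compatible surjection onto $G_{n_k}$) precisely when the classifying subgroup is normal in the free group $F=\pi_1(\Omega_{n_k})$, not merely in $N=\pi_1(\Gamma_{n_k})$; both note that the $\mathbb Z/2\mathbb Z$-homology cover causes no trouble because $[N,N]N^2$ is verbal, hence characteristic in $N$ and so normal in $F$; and both treat the surjection and the isometry bullets the same way (Lemma~\ref{lem:cover} together with $G(Cay(G_n,S_n))=G_n$, and the fact that relabelling does not change the edge-length metric). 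Where you genuinely diverge is the girth-enlarging intermediate cover. The paper sidesteps the problem by discarding the arbitrary finite-sheeted regular cover altogether and iterating the $\mathbb Z/2\mathbb Z$-homology cover a fixed number of times, so that $\pi_1(\Lambda_k)=((\pi_1(\Gamma_{n_k})^{(2)})^{(2)})^{\cdots(2)}$ is characteristic in $\pi_1(\Gamma_{n_k})$ and hence automatically normal in $F$; this keeps the construction inside the template of \cite{AGS,AO,Os}, so the walling verification is a citation. You instead keep a separate girth-enlarging step but realize it by residual finiteness of $F$: a finite quotient separating the nontrivial elements of length at most $L$ yields an $F$-normal, finite-index subgroup $M_1\leqslant N$ with no short words, after which a single homology cover $[M_1,M_1]M_1^2$ finishes the job. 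Your device is correct and arguably more flexible (it achieves any prescribed girth in one step, rather than relying on each homology cover at least doubling the girth), but it replaces the specific cover of \cite{Os} by a new one, so the burden of checking that the (proper) lacunary walling condition still holds shifts onto the claim that only a lower bound on the girth of the intermediate cover is used there; you state this, and it is consistent with how the paper describes the role of that cover, but it is the one point where your argument leans on the cited construction in a way the paper's characteristic-subgroup trick does not.
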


\begin{proof}
A covering is regular if and only if the fundamental group of the covering graph is a normal subgroup in the fundamental group of the base graph.
A graph is a Cayley graph (with respect to $\ell$ generators) if and only if it is a regular graph covering of a bouquet  (of $\ell$ cycle graphs).
Since $\Gamma_n$ is a Cayley graph, there exists a regular graph covering $\Gamma_n\to \Omega_n$, where $\Omega_n$ is the bouquet of $\vert S_n\vert$ cycle graphs.
Let $p \colon (\Lambda_{k})_{k\in\mathbb{N}} \to (\Gamma_{n_k})_{k\in\mathbb{N}}$ be a finite-sheeted regular graph covering given by Proposition~\ref{prop:surj}.
We have $\pi_1(\Lambda_{k})\unlhd\pi_1(\Gamma_{n_k})\unlhd \pi_1(\Omega_{n_k}).$  This does not give the required $\pi_1(\Lambda_{k})\unlhd \pi_1(\Omega_{n_k}).$
An observation now is that instead of an arbitrary finite-sheeted regular covering of $\Gamma$ followed by the $\mathbb Z/2\mathbb Z$-homology covering
(that we alluded to above Theorem~\ref{thm:exist}),  we take finitely many times (same for each $k\in\mathbb N$) the $\mathbb Z/2\mathbb Z$-homology covering (first, to sufficiently enlarge the girth of the base graphs, then to produce walls). This amounts to take finitely many times the subgroup generated by all the squares of elements\footnote{For a group $K$, we denote by $K^{(2)}\unlhd K$ the subgroup generated by the squares of elements of $K$.} of the fundamental group of the base graph, cf.~\cite{AGS}. 
Such a subgroup is characteristic. Being characteristic is transitive. This yields  a characteristic subgroup $\pi_1(\Lambda_{k}):=((\pi_1(\Gamma_{n_k})^{(2)})^{(2)})^{\ldots (2)}\unlhd \pi_1(\Gamma_{n_k}),$ whence $\pi_1(\Lambda_{k})\unlhd \pi_1(\Omega_{n_k})$
as required. Thus,
$\Lambda_k$ is a Cayley graph for a group $H_k$ on $\vert S_{n_k}\vert$ generators. 

A surjective homomorphism $H_k\twoheadrightarrow G_{n_k}$ 
with the identity map on the generators $S_{n_k}$ does exist by construction. 
In detail, one can use Lemma~\ref{lem:cover} and an observation that a graphical presentation given by a Cayley graph of a group  defines this group itself,
that is,  $G(Cay(G_n, S_n))=G_n$ and $G(Cay(H_k, S_{n_k}))=H_k$~\cite[Example 1.2]{Dom}. 

Thus, each $\Gamma_{n_k}$ has two labelings, by $S$ and by $S_{n_k}$.
Since both labelings are reduced the corresponding metric spaces are both isometric to $\Gamma_{n_k}$ equipped with the edge-length metric.
The labeling of  $\Lambda_k$ is induced from the corresponding labeling of $\Gamma_{n_k}$ through the graph covering, whence
the same conclusion for $\Lambda_k.$ 
\end{proof}

\section{Wreath products}\label{sec:wp}

\begin{defn}[Restricted permutational wreath product]\label{defn:wp}
Let $A$ and $B$ be finitely generated groups and let $p\colon B\twoheadrightarrow Q$ be a surjective homomorphism. Then
the \emph{restricted permutational wreath product} of  $A$ and $B$ through $Q$ is the split extension
$$
 A\wr_{Q} B=\bigoplus_{Q}A\rtimes B,
$$
where 
$\bigoplus_{Q}A$ is the group of finitely supported functions $\phi\colon Q \to A$
with the pointwise multiplication, and $B$ acts on $\bigoplus_{Q}A$ by permuting the indices by multiplication on the left.

Let $U$ and $V$ be finite generating sets of $A$ and $B$, respectively. Then
$\{(\delta_u, 1_B )\colon u\in U\}\cup\{(\mathbf 1, v)\colon  v\in V\}$ generate $A\wr_{Q} B,$ where
$\delta_u(q)=u$ if $q=1_Q$ and
$\delta_u(q)=1_A$ otherwise, and $\mathbf 1(q)=1_A$ for all $q\in Q$.
We denote such a generating set briefly by $\{\delta_{u}\colon u\in U\}\cup V.$
 \end{defn}

We consider groups $G$ and $H$ given by Proposition~\ref{prop:surj}
applied to a dg-bounded expander which is a sequence of finite Cayley graphs; see, for instance, Example~\ref{ex:lps} for such an expander and Proposition~\ref{prop:surjC} for additional properties of such $G$ and $H$.
We denote by $S$ and $T$ their respective finite generating sets\footnote{We use a distinct letter $T$ for generators of $H$, for the generality of the argument.}.
Our group in Theorem~\ref{thm:main} is the restricted permutational wreath product of $\Z/2\Z$ and $H$ through its quotient $G$.
We denote this group by $$\G=\Z/2\Z\wr_G H.$$ 
We consider the following generating subset $\Sigma=\{\delta_{1_{G}}\}\cup T$ of $\G$. 

The Cayley graph $Cay(G,S)$ contains a sequence of subgraphs isometric to  Cayley graphs $Cay(G_n,S_n)_{n\in\mathbb{N}}$ forming a dg-bounded expander\footnote{e.g. with respect to $S_n$ with $|S_n|=p+1$ if we work with the expander from Example~\ref{ex:lps}.} and $Cay(H,T)$ contains a sequence of subgraphs isometric to Cayley graphs $Cay(H_n,T_n)_{n\in\mathbb{N}}$ such that the  restriction of the projection $H\twoheadrightarrow G$ to $H_n$ identifies with a surjective homomorphism $H_n\twoheadrightarrow G_n$ mapping $T_n\mapsto S_n$.  We shall denote $$\G_n=\Z/2\Z\wr_{G_n} H_n, \hbox{ and } \Sigma_n=\{\delta_{1_{G_n}}\}\cup T_n.$$

We have the following easy fact.
\begin{prop}\label{prop:subwreath}
Let $\pi\colon K\twoheadrightarrow K'$ be a surjective homomorphism of two finitely generated groups, let $V$ be a finite generating subset of $K$, and denote $V'=\pi(V)$. 
Assume that $Cay(K,V)$ contains a copy of some Cayley graph $Cay(L,U)$ as a subgraph, that there exists a group $L'$ and a bijection $f\colon\pi(L)\to L'$, such that $f\circ \pi$ defines a surjective homomorphism $L\twoheadrightarrow L'$. 
Then $Cay(\Z/2\Z\wr_{L'} L,\{\delta_{1_{L'}}\}\cup U)$ embeds as a subgraph of $Cay(\Z/2\Z\wr_{K'} K,\{\delta_{1_{K'}}\}\cup V)$.
 \end{prop}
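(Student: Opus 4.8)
The plan is to exhibit an explicit injective graph morphism $\Phi$ from $Cay(\Z/2\Z\wr_{L'}L,\{\delta_{1_{L'}}\}\cup U)$ to $Cay(\Z/2\Z\wr_{K'}K,\{\delta_{1_{K'}}\}\cup V)$ and to check that it carries each of the two families of generating edges to edges. I write a vertex of the source as a pair $(\phi,\ell)$ with $\phi\colon L'\to\Z/2\Z$ finitely supported and $\ell\in L$, and a vertex of the target as $(\psi,k)$ with $\psi\colon K'\to\Z/2\Z$ finitely supported and $k\in K$. Let $\iota\colon L\hookrightarrow K$ be the injection on vertices furnished by the subgraph copy $Cay(L,U)\hookrightarrow Cay(K,V)$; by hypothesis $\pi\circ\iota$ has image $\pi(L)$, which is the domain of the bijection $f\colon\pi(L)\to L'$, and $\theta\colon L\to L'$, $\theta(\ell)=f(\pi(\iota(\ell)))$, is the given surjective homomorphism (the ``$f\circ\pi$'' of the statement), through which $L$ acts on the index set $L'$ by left translation. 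I would then define $\Phi(\phi,\ell)=(\psi_\phi,\iota(\ell))$, where $\psi_\phi$ is the transport of $\phi$ to $K'$ along $f$: set $\psi_\phi(x)=\phi(f(x))$ for $x\in\pi(L)$ and $\psi_\phi(x)=0$ otherwise. As $f$ is a bijection and $\phi$ has finite support, $\psi_\phi$ again has finite support; and since both $\iota$ and $\phi\mapsto\psi_\phi$ are injective, $\Phi$ is injective on vertices.

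For the edges, first consider a generator $u\in U$: right multiplication by $(\mathbf 1,u)$ sends $(\phi,\ell)$ to $(\phi,\ell u)$, leaving the function coordinate fixed. Its $\Phi$-image is the pair of vertices $(\psi_\phi,\iota(\ell))$ and $(\psi_\phi,\iota(\ell u))$, which share the same function coordinate; since $\iota$ is a subgraph embedding, $\iota(\ell)^{-1}\iota(\ell u)\in V\cup V^{-1}$, so these vertices are joined by a $V$-edge of the target. Thus $U$-edges go to $V$-edges, and this case is routine.

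The delicate case, and the only place where the full strength of the hypotheses enters, is the edge labelled $\delta_{1_{L'}}$. Using the semidirect-product multiplication and the fact that $L$ permutes the indices $L'$ through $\theta$ by left translation (so that ${}^\ell\delta_{1_{L'}}=\delta_{\theta(\ell)}$), right multiplication by $(\delta_{1_{L'}},1_L)$ sends $(\phi,\ell)$ to $(\phi+\delta_{\theta(\ell)},\ell)$, where $+$ denotes the group law of $\bigoplus_{L'}\Z/2\Z$, i.e. a pointwise flip at the index $\theta(\ell)$. Transporting along $f$ flips $\psi_\phi$ exactly at $f^{-1}(\theta(\ell))$, so $\Phi$ of the new vertex is $(\psi_\phi+\delta_{f^{-1}(\theta(\ell))},\iota(\ell))$. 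On the other side, since $K$ acts on $K'$ through $\pi$, right multiplication by $(\delta_{1_{K'}},1_K)$ sends $\Phi(\phi,\ell)=(\psi_\phi,\iota(\ell))$ to $(\psi_\phi+\delta_{\pi(\iota(\ell))},\iota(\ell))$. These two vertices coincide precisely because $f^{-1}(\theta(\ell))=f^{-1}(f(\pi(\iota(\ell))))=\pi(\iota(\ell))$; this identity, which is exactly what the bijectivity of $f$ together with the requirement that $f\circ\pi$ be a homomorphism guarantee, is the crux of the argument, and I expect the matching of the flipped coordinate to be the sole genuine obstacle. With both edge types handled, $\Phi$ is an injective graph morphism sending edges to edges, and therefore identifies the source with a subgraph of the target, as required.
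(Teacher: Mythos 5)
Your proof is correct and follows essentially the same route as the paper's: the same vertex map (extend the function coordinate by zero outside $\pi(L)\cong L'$, keep the group coordinate inside the embedded copy of $L$) and the same case check on the two types of generating edges. You merely make explicit the transport along the bijection $f$ and the computation showing the flipped index matches, which the paper leaves implicit.
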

 \begin{proof}
To simplify notation, we identify $Cay(L,U)$ (resp.\ $Cay(L',U')$)  as a subgraph of $Cay(K,V)$  (resp.\ of $Cay(K',V')$).
Let us first describe the embedding at the level of vertices.  To every pair $(\phi,l)\in \bigoplus_{L'}\mathbb Z/2\mathbb Z\rtimes L$, we associate the element 
$(\Phi,l)\in \bigoplus_{K'}\mathbb Z/2\mathbb Z\rtimes K$, where $\Phi=\phi$ in restriction to $L'$, and $\Phi=0$ elsewhere. 
Two vertices $(\phi_1,l_1)$ and $(\phi_2,l_2)$ are linked by an edge in $Cay(\Z/2\Z\wr_{L'} L,\{\delta_{1_{L'}}\}\cup U)$ if and only if either $l_1=l_2$ and $\phi_1$ and $\phi_2$ differ exactly at $\pi(l_1)$, or if $\phi_1=\phi_2$, and $l_1$ and $l_2$ are neighbors in $(L,U)$. Since an analogous statement holds for two vertices of $Cay(\Z/2\Z\wr_{K'} K,\{\delta_{1_{K'}}\}\cup V)$, we deduce that $(\Phi_1,l_1)$ and $(\Phi_2,l_2)$ are also neighbors in this Cayley graph. 
 \end{proof}

 \begin{cor}\label{cor:subwreath}\label{cor:subgraph}
 The graphs $Cay(\G_n,\Sigma_n)_{n\in\mathbb{N}}$ embed as subgraphs of $Cay(\G,\Sigma)$.
 \end{cor}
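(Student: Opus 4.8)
The plan is to deduce Corollary~\ref{cor:subgraph} from Proposition~\ref{prop:subwreath} by a direct application, with the only work being the verification that the hypotheses of the proposition are met for each $n$. First I would recall the setup just established before the corollary: by Proposition~\ref{prop:surjC} we have, for each $n$, a surjective homomorphism $\pi\colon H\twoheadrightarrow G$ that restricts to a surjection $H_n\twoheadrightarrow G_n$ sending $T_n$ to $S_n$, and the Cayley graphs $Cay(H_n,T_n)$ embed as subgraphs of $Cay(H,T)$. So I would instantiate Proposition~\ref{prop:subwreath} with $K=H$, $K'=G$, $V=T$ (so $V'=\pi(T)=S$), $L=H_n$, $U=T_n$, and $L'=G_n$.

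The substantive point to check is the existence of the group $L'=G_n$ together with a bijection $f\colon \pi(H_n)\to G_n$ such that $f\circ\pi|_{H_n}$ is a surjective homomorphism $H_n\twoheadrightarrow G_n$. Here the crucial observation is that $\pi(H_n)$, the image of the subgroup $H_n\le H$ under the global projection $\pi\colon H\twoheadrightarrow G$, need \emph{not} literally equal the subgroup $G_n\le G$; rather, Proposition~\ref{prop:surjC} only guarantees that the restriction of $\pi$ \emph{identifies with} the homomorphism $H_n\twoheadrightarrow G_n$ mapping $T_n\mapsto S_n$. The bijection $f$ is exactly the device that absorbs this identification: since $\pi|_{H_n}$ surjects onto a copy of $G_n$ (via $T_n\mapsto S_n$), one takes $f$ to be the canonical isomorphism between $\pi(H_n)$ and $G_n$ furnished by this identification of generators, and then $f\circ\pi|_{H_n}\colon H_n\to G_n$ is by construction the surjective homomorphism sending $T_n\mapsto S_n$. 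This is precisely the abstract framework Proposition~\ref{prop:subwreath} was designed to handle, and it is why the proposition was stated with the extra data $(L',f)$ rather than assuming $\pi(L)$ is itself a subgroup realizing the quotient.

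Once the hypotheses are in place, Proposition~\ref{prop:subwreath} yields, for each $n$, an embedding of $Cay(\Z/2\Z\wr_{G_n}H_n,\{\delta_{1_{G_n}}\}\cup T_n)$ as a subgraph of $Cay(\Z/2\Z\wr_{G}H,\{\delta_{1_{G}}\}\cup T)$. Unwinding the notation fixed earlier, the source is exactly $Cay(\G_n,\Sigma_n)$ with $\G_n=\Z/2\Z\wr_{G_n}H_n$ and $\Sigma_n=\{\delta_{1_{G_n}}\}\cup T_n$, and the target is $Cay(\G,\Sigma)$ with $\G=\Z/2\Z\wr_G H$ and $\Sigma=\{\delta_{1_G}\}\cup T$. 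Ranging over $n\in\mathbb{N}$ gives the family of subgraph embeddings asserted in the corollary.

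The step I expect to be the main (though modest) obstacle is the bookkeeping around $\pi(H_n)$ versus $G_n$ described above: it is tempting to assume the image subgroup \emph{is} $G_n$, and the point flagged in the acknowledgements footnote—that $Cay(\G_n,\Sigma_n)$ may not embed \emph{isometrically}—signals that one must be careful to claim only a subgraph embedding and to route the argument through the auxiliary bijection $f$. Everything else is a routine matching of notation, so I would keep the proof to one or two sentences invoking Proposition~\ref{prop:subwreath} with the stated substitutions.
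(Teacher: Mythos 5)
Your proof is correct and is exactly the argument the paper intends: the corollary is stated as an immediate consequence of Proposition~\ref{prop:subwreath} applied with $K=H$, $K'=G$, $V=T$, $L=H_n$, $U=T_n$, $L'=G_n$, using the setup from Proposition~\ref{prop:surjC}. Your careful handling of the bijection $f$ identifying $\pi(H_n)$ with $G_n$, and your remark that only a subgraph (not isometric) embedding is claimed, match the paper's intent precisely.
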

 
\section{No coarse embedding into a Hilbert space}
This section is dedicated to the proof that $Cay(\G,\Sigma)$ does not coarsely embed into a Hilbert space. This relies on the following {\it relative} Poincar\'e inequality satisfied by the sequence of graphs $Cay(\G_n,\Sigma_n)_{n\in\mathbb{N}}$. 
In what follows, $\HH$ denotes a Hilbert space, $\|\cdot \|_2$ its norm, and $B(v, r)$ the ball of radius $r>0$ centered at $v\in\HH.$

\begin{thm}\label{thm:Relpoincare}
There exists a constant $C>0$ such that every function $f\colon\G_n\to \HH$ satisfies
$$\frac{1}{|X_n|}\sum_{(x,y)\in \G_n\times X_n}\|f(x)-f(xy)\|^2_2\leqslant C\sum_{(x,\sigma)\in \G_n\times \Sigma_n}\|f(x)-f(x\sigma)\|^2_2,$$
where $X_n=\{\delta_g, \; g\in G_n\}\hookrightarrow W_n$.
\end{thm}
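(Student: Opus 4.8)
The plan is to exploit the wreath-product structure $\G_n=\bigl(\bigoplus_{G_n}\Z/2\Z\bigr)\rtimes H_n$ together with the expansion of the quotient $G_n$. Write an element of $\G_n$ as $(\phi,h)$ with $\phi\in\bigoplus_{G_n}\Z/2\Z$ and $h\in H_n$, and let $\bar h\in G_n$ be the image of $h$ under the surjection $H_n\twoheadrightarrow G_n$ of Proposition~\ref{prop:surjC}. Then $Cay(\G_n,\Sigma_n)$ has exactly two kinds of edges: the \emph{lamp edges} $(\phi,h)\to(\phi+\delta_{\bar h},h)$ coming from $\delta_{1_{G_n}}$, and the \emph{walk edges} $(\phi,h)\to(\phi,ht)$, $t\in T_n$. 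Right translation by $\delta_g\in X_n$ sends $(\phi,h)$ to $(\phi+\delta_{\bar h g},h)$, so as $g$ runs over $G_n$ the flipped position $\bar h g$ runs over all of $G_n$. Hence, up to the factor $1/|X_n|=1/|G_n|$, the left-hand side is the \emph{total cost of flipping a single lamp at an arbitrary position}, whereas the right-hand side controls only flips at the current position $\bar h$ (lamp energy $E_{\mathrm{lamp}}$) together with the cost of moving the cursor in $H_n$ (walk energy $E_{\mathrm{walk}}$). The whole task is therefore to pay for moving the cursor to an arbitrary target using the expander geometry of $G_n$.

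First I would diagonalise the lamp variable by a Fourier transform on the finite abelian $2$-group $\bigoplus_{G_n}\Z/2\Z$, whose characters are indexed by subsets $A\subseteq G_n$, $\chi_A(\phi)=(-1)^{\sum_{g\in A}\phi(g)}$. Writing $f(\phi,h)=\sum_{A\subseteq G_n}\chi_A(\phi)\,\widehat f(A,h)$ one has, for each $h$, $f(\phi+\delta_p,h)-f(\phi,h)=-2\sum_{A\ni p}\chi_A(\phi)\widehat f(A,h)$, so Parseval in the $\phi$-variable turns every term of the theorem into a weighted sum over the levels $A$. A direct computation gives
\[
\text{LHS}=\frac{4\cdot 2^{|G_n|}}{|G_n|}\sum_{A\subseteq G_n}|A|\sum_{h\in H_n}\|\widehat f(A,h)\|_2^2,
\]
while $E_{\mathrm{lamp}}=4\cdot 2^{|G_n|}\sum_A\sum_{h:\ \bar h\in A}\|\widehat f(A,h)\|_2^2$ and $E_{\mathrm{walk}}=2^{|G_n|}\sum_A\sum_{h,t}\|\widehat f(A,ht)-\widehat f(A,h)\|_2^2$. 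As these are sums over the independent levels $A$, the theorem reduces to a single inequality, \emph{uniform in $A\subseteq G_n$ and in $n$}: for every $v=\widehat f(A,\cdot)\colon H_n\to\HH$,
\begin{equation}\label{eq:level}
\frac{|A|}{|G_n|}\sum_{h\in H_n}\|v(h)\|_2^2\ \leqslant\ C\Bigl(\sum_{h:\ \bar h\in A}\|v(h)\|_2^2+\sum_{h\in H_n,\,t\in T_n}\|v(ht)-v(h)\|_2^2\Bigr).
\end{equation}

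The remaining, and genuinely hard, step is to prove \eqref{eq:level}. This is a \emph{relative} Poincaré inequality on $H_n$, comparing the total $\ell^2$-mass of $v$ weighted by the density $|A|/|G_n|$ with the mass already sitting over $A$ plus the Dirichlet (walk) energy. It cannot be obtained from a Poincaré inequality on $H_n$ itself, since $H_n=G(\Lambda)$ is a graph with walls and coarsely embeds into Hilbert space, hence satisfies \emph{no} uniform spectral-gap inequality; the expansion lives only on the quotient $G_n$ and must be transported upstairs through the finite-sheeted covering $p\colon Cay(H_n,T_n)\to Cay(G_n,S_n)$ of Proposition~\ref{prop:surjC}. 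Concretely I would first replace $v$ by the scalar $\nu=\|v\|_2\colon H_n\to[0,\infty)$: this leaves the two mass terms of \eqref{eq:level} unchanged and only decreases the walk energy because $|\,\|v(ht)\|_2-\|v(h)\|_2\,|\leqslant\|v(ht)-v(h)\|_2$, so it suffices to treat nonnegative scalars. The effect is to discard the fluctuations of $v$ \emph{in direction} at constant norm — precisely the wall-supported fluctuations of $H_n$, which are invisible to $|A|/|G_n|$ and to $E_{\mathrm{lamp}}$ and which are \emph{not} controlled by the walk energy. One then pushes the scalar mass down to the fibre-mass function $\beta(g)=\sum_{h\in p^{-1}(g)}\nu(h)^2$ on $G_n$: since $p$ is a covering, the edges of $H_n$ over a fixed edge of $G_n$ form a perfect matching between the two fibres, so the variation of $\beta$ along $Cay(G_n,S_n)$ is controlled by the walk energy, while $\sum_g\beta(g)=\sum_h\nu^2$ and $\sum_{g\in A}\beta(g)=\sum_{\bar h\in A}\nu^2$. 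The uniform Cheeger constant $c>0$ of the expander $G_n$ (Definition~\ref{def:ex}) then forces enough of the mass of $\beta$ to sit over $A$, yielding \eqref{eq:level} with $C=O(1/c)$; this Cheeger-type estimate for the fibre-mass function on $G_n$, transported through $p$, is exactly the relative Poincaré inequality for the relative expander $Cay(H_n,T_n)$ established in \cite{AT}.

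I expect the main obstacle to be this final transport of expansion through the covering, and in particular making it quantitative and \emph{uniform in $n$} even though the number of sheets $|\Ker(H_n\twoheadrightarrow G_n)|$ tends to infinity. The reduction to scalar norms is what makes it possible: it removes the wall-supported direction-fluctuations that would otherwise defeat any bound of \eqref{eq:level} by the walk energy, after which \eqref{eq:level} becomes a clean Cheeger-type estimate for the fibre-mass function on the expander $G_n$.
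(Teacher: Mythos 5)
Your Fourier-analytic reduction is correct and is a genuinely different route from the paper's: the paper rewrites the left-hand side as $\|f-\rho_n(y)f\|_2^2$ for the right regular representation $\rho_n$, observes that this is a conditionally negative definite function of $y$, and reduces Theorem~\ref{thm:Relpoincare} to $\sup_{y\in X_n}\psi(y)\leqslant C\sup_{\sigma\in\Sigma_n}\psi(\sigma)$ for all conditionally negative definite $\psi$ on $\G_n$, which it deduces from Property $(\tau)$ of $(G_n)_n$ together with Chifan--Ioana's relative property (T) theorem restated for positive definite functions and a Schoenberg-type lemma. Your decomposition over the characters $\chi_A$ of $\bigoplus_{G_n}\Z/2\Z$ is in effect an unpacking of that black box: the three energy computations are correct, your level inequality is indeed equivalent to the theorem (take $f(\phi,h)=\chi_A(\phi)v(h)$ for the converse), and the reduction to nonnegative scalars via $\nu=\|v\|_2$ is sound.

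However, the final step --- the only place where expansion is used --- has a genuine gap. Pushing down to $\beta(g)=\sum_{h\in p^{-1}(g)}\nu(h)^2$ and invoking the Cheeger constant controls the total edge-variation of $\beta$ only by $\sum_{h,t}\bigl|\nu(h)^2-\nu(ht)^2\bigr|\leqslant 2\sqrt{|T_n|}\,\sqrt{E_{\mathrm{walk}}}\,\sqrt{M}$ with $M=\sum_h\nu(h)^2$; Cauchy--Schwarz is unavoidable here because $\beta$ is quadratic in $\nu$. The resulting estimate, $\frac{|A|}{|G_n|}M\leqslant 2\sum_{g\in A}\beta(g)+O(c^{-1})\sqrt{E_{\mathrm{walk}}M}$, cannot be converted into the level inequality with a constant independent of the density $|A|/|G_n|$: when $\sum_{g\in A}\beta=0$ it only gives $\frac{|A|}{|G_n|}M=O(c^{-2})\frac{|G_n|}{|A|}E_{\mathrm{walk}}$, which degenerates already for $|A|=1$. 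The appeal to \cite{AT} does not close this either, since the relative Poincar\'e inequality there bounds the total variance by the Dirichlet energy plus the \emph{within-fibre} variance, a different statement. The fix is to push down the \emph{square root} of the fibre mass, $V(g)=\bigl(\sum_{h\in p^{-1}(g)}\|v(h)\|_2^2\bigr)^{1/2}$: right multiplication by $t$ matches $p^{-1}(g)$ with $p^{-1}(gs)$, so the triangle inequality in $\ell^2(p^{-1}(g);\HH)$ gives $|V(g)-V(gs)|^2\leqslant\sum_{h\in p^{-1}(g)}\|v(ht)-v(h)\|_2^2$ with no loss, whence $\sum_{\mathrm{edges}}|V(g)-V(gs)|^2\leqslant E_{\mathrm{walk}}$. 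The $\ell^2$ spectral gap of the expander then yields $\sum_g|V(g)-\bar V|^2\leqslant C_0E_{\mathrm{walk}}$, and combining $M\leqslant 2|G_n|\bar V^2+2C_0E_{\mathrm{walk}}$ with $|A|\bar V^2\leqslant 2\sum_{g\in A}V(g)^2+2C_0E_{\mathrm{walk}}$ gives the level inequality with $C=\max(4,6C_0)$, uniform in $A$ and $n$ (and makes the reduction to scalars unnecessary).
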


\begin{cor}\label{cor:noCE}
There exists a constant $D>0$ such that every $1$-Lipschitz map $f\colon\G\to \HH$ satisfies
$$\sup_{v\in \HH}|f^{-1}(B(v,D))|=\infty.$$ 
In particular, $\G$ does not coarsely embed into a Hilbert space.
\end{cor}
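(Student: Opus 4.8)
The plan is to deduce Corollary~\ref{cor:noCE} from the relative Poincaré inequality of Theorem~\ref{thm:Relpoincare} by a routine averaging argument; the substantive content has already been placed in that inequality, so what remains is bookkeeping together with two external inputs: Corollary~\ref{cor:subgraph}, which lets us transfer the inequality for $f$ restricted to the finite graphs $Cay(\G_n,\Sigma_n)$ to the ambient group, and the expander facts $|G_n|\to\infty$ and $\sup_n|\Sigma_n|<\infty$.

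First I would fix a $1$-Lipschitz map $f\colon\G\to\HH$ and, for each $n$, use Corollary~\ref{cor:subgraph} to view $Cay(\G_n,\Sigma_n)$ as a subgraph of $Cay(\G,\Sigma)$. Since this embedding sends edges to edges, the restriction of $f$ to $\G_n$ is still $1$-Lipschitz for the intrinsic metric of $Cay(\G_n,\Sigma_n)$; in particular $\|f(x)-f(x\sigma)\|_2\leqslant 1$ for all $x\in\G_n$ and $\sigma\in\Sigma_n$. As the underlying expander has uniformly bounded degree, there is a constant $K$ with $|\Sigma_n|\leqslant K$ for every $n$, so the right-hand side of Theorem~\ref{thm:Relpoincare} is at most $C\,|\G_n|\,|\Sigma_n|\leqslant CK\,|\G_n|$. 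Substituting into the inequality and clearing the factor $1/|X_n|$ gives
$$\sum_{(x,y)\in\G_n\times X_n}\|f(x)-f(xy)\|_2^2\;\leqslant\;CK\,|\G_n|\,|X_n|,$$
so the average of $\|f(x)-f(xy)\|_2^2$ over the $|\G_n|\,|X_n|$ pairs is at most $CK$.

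Next I would extract one good basepoint. Averaging over $x$ first produces some $x_0\in\G_n$ with $\tfrac{1}{|X_n|}\sum_{y\in X_n}\|f(x_0)-f(x_0y)\|_2^2\leqslant CK$, and Markov's inequality then yields at least $|X_n|/2$ elements $y\in X_n$ with $\|f(x_0)-f(x_0y)\|_2\leqslant\sqrt{2CK}=:D$. The points $x_0y$ are pairwise distinct because left multiplication is injective, and $|X_n|=|G_n|$; hence the ball $B(f(x_0),D)$ has at least $|G_n|/2$ preimages under $f$. Taking $v=f(x_0)$ and letting $n\to\infty$, the expander property $|G_n|\to\infty$ forces $\sup_{v\in\HH}|f^{-1}(B(v,D))|=\infty$. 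Crucially $D=\sqrt{2CK}$ depends only on $C$ and the degree bound $K$, hence is independent of $n$; ensuring this uniformity is the one point that genuinely must be checked, since the analytic difficulty has all been absorbed into Theorem~\ref{thm:Relpoincare}.

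Finally, the ``in particular'' clause follows from the standard observation that a coarse embedding $f$ of a finitely generated group is automatically Lipschitz: summing the upper control along a word geodesic gives $\|f(x)-f(y)\|_2\leqslant\gamma(1)\,d(x,y)$, so after rescaling we may assume $f$ is $1$-Lipschitz while retaining a proper lower bound $\|f(x)-f(y)\|_2\geqslant\rho(d(x,y))$. If infinitely many points mapped into a single ball $B(v,D)$, they would be pairwise at $\HH$-distance at most $2D$, hence at bounded word distance by properness of $\rho$, and so would lie in a fixed finite ball of $\G$, a contradiction. Thus the first part already precludes any coarse embedding of $\G$ into $\HH$.
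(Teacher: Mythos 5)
Your proof is correct and follows essentially the same route as the paper: restrict $f$ to the embedded subgraphs $Cay(\G_n,\Sigma_n)$, bound the right-hand side of Theorem~\ref{thm:Relpoincare} by the uniform bound on $|\Sigma_n|$, average to find a good basepoint, and apply Markov's inequality to get $|X_n|/2=|G_n|/2$ points in a ball of radius $D=\sqrt{2C\sup_n|\Sigma_n|}$. Your explicit justification of the ``in particular'' clause is a welcome addition the paper leaves implicit.
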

\begin{proof}[Proof of Corollary~\ref{cor:noCE}]
By Corollary \ref{cor:subgraph}, we view $Cay(\G_n,\Sigma_n)$ as a subgraph of $Cay(\G,\Sigma)$. Therefore, $f$ induces a $1$-Lipschitz map from $\G_n$ to $\HH$.
We deduce from Theorem \ref{thm:Relpoincare} that there exists a constant $C>0$ such that
$$\frac{1}{|X_n||W_n|}\sum_{(x,y)\in \G_n\times X_n}\|f(x)-f(xy)\|^2_2\leqslant C|\Sigma_n|,$$
from which we deduce that there exists $x\in W_n$ such that 
$$\frac{1}{|X_n|}\sum_{y\in X_n}\|f(x)-f(xy)\|^2_2\leqslant C|\Sigma_n|.$$
The sizes $|\Sigma_n|$ are bounded above, uniformly over $n\in\mathbb N$, e.g. by $|\Sigma|.$ Take a constant $D>0$, independent of $n$, such that $D\geqslant \sqrt{2C|\Sigma_n|}$. 
The above inequality implies that there exists a subset $Y_n\subseteq W_n$ with $|Y_n|\geqslant |X_n|/2$ and such that for all $y\in Y_n$, $f(y)$ lies in $B(f(x),D)\subseteq \HH$. Since the cardinality of $Y_n$ tends to infinity, this proves the corollary.
 \end{proof}
 
 \
 
\begin{proof}[Proof of Theorem~\ref{thm:Relpoincare}]
Consider the right regular representation $\rho_n$ of the group $\G_n$ on $\ell^2(\G_n,\HH)$, i.e.\ $\rho_n(w)f=f(\cdot w)$ for $w\in \G_n$. Then the relative Poincar\'e inequality can be rewritten as 
$$\frac{1}{|X_n|}\sum_{y\in X_n}\|f-\rho_n(y)f\|_2^2\leqslant C\sum_{\sigma\in \Sigma_n}\|f-\rho_n(\sigma)f\|_2^2.$$ 
Note that $w\mapsto \psi(w)=\|f-\rho_n(w)f\|_2^2$ defines a conditionally negative definite function\footnote{See e.g.~\cite[Section 1.1 and Section 2.1]{CCJ} for the definition and the characterization of the Haagerup property using such functions.} on $\G_n$. Hence the previous inequality, and therefore Theorem \ref{thm:Relpoincare}, is a consequence of the following inequality, valid for all conditionally negative definite functions $\psi$ on $\G_n$
\begin{equation}\label{eq:CNineq}
\sup_{y\in X_n}\psi(y)\leqslant C\sup_{\sigma\in \Sigma_n}\psi(\sigma).
\end{equation}
On the other hand, let us interpret the fact that  $Cay(G_n,S_n)_{n\in\mathbb{N}}$ is an expander in terms of positive definite functions on $G_n$. Set $r=|S_n|$ and  consider the projection from the free group  
$F_r\twoheadrightarrow G_n$, whose kernel is denoted by $N_n$.  Recall that  $Cay(G_n,S_n)_{n\in\mathbb{N}}$ being an expander is equivalent to the fact that $F_r$  has Property $(\tau)$ with respect to the sequence $(N_n)_{n\in\mathbb{N}}$~\cite[Theorem 4.3.2]{Lub}. This condition says that the representations factoring through some $G_n$ admit a uniform, over $n\in\mathbb N,$ Kazhdan constant.  
 In terms of positive definite functions, this implies the existence of  $\eps>0$ and $\delta>0$ such that for every $n$, and every positive definite function $\phi$ on $G_n$ such that 
 $\inf_{s\in S_n} |\phi(s)|\geqslant 1-\delta$, we have $\inf_{g\in G_n}|\phi(g)|\geqslant \eps$.
Therefore, (\ref{eq:CNineq}) follows from Proposition \ref{prop:semi direct} below.

 \end{proof}

\begin{prop}\label{prop:semi direct}
Given $\eps>0$ and $\delta>0$, there exists $C>0$ such that the following holds.
Let $A$ be an abelian group, $K_0$ a group, and consider a homomorphism $\alpha\colon K_0\to \Aut(A)$. Consider another group $K$ with a surjective homomorphism $\pi\colon K\twoheadrightarrow K_0$,  and denote $\bar{\alpha}=\alpha \circ \pi$. Let $U$ be a generating subset of $K$, and let $U_0=\pi(U).$
 Assume that for every positive definite function $\phi$ on $K_0$ satisfying $\inf_{u\in U_0} |\phi(u)|\geqslant 1-\delta$, we have $\inf_{k\in K_0}|\phi(k)|\geqslant \eps$.
Then, for every $a\in A$, every generating subset $V$ of $L=A\rtimes_{\bar{\alpha}}K$ containing $a$ and $U$, and every conditionally negative definite function $\psi$ on $L$, we have
$$\sup_{y\in \bar{\alpha}(K)(a)}\psi(y)\leqslant C\sup_{v\in V}\psi(v).$$ 
\end{prop}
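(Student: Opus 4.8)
The plan is to pass from the conditionally negative definite $\psi$ to positive definite functions via Schoenberg's theorem, encode the abelian direction by a measure on the dual group $\widehat A$, and feed the hypothesis on $K_0$ into a Koopman-type representation attached to the dual action. The orbit $\bar\alpha(K)(a)\subseteq A$ consists of conjugates $(0,k)(a,1)(0,k)^{-1}=(\bar\alpha(k)(a),1)$, so controlling $\psi$ on it is a relative-Kazhdan phenomenon, and the expansion of the generating set of $K_0$ (packaged in the hypothesis) is what makes the bound uniform.

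\textbf{Reduction and setup.} For each $t>0$ the function $\phi_t=e^{-t\psi}$ is positive definite on $L$ with $\phi_t(1)=1$, and with $M=\sup_{v\in V}\psi(v)$ one has $\phi_t(v)\geqslant e^{-tM}$ for all $v\in V$, in particular for $a$ and for every $u\in U$. If I can produce $\eps'>0$ and a choice $t=\eta/M$, both depending only on $\eps,\delta$, with $\phi_t(y)\geqslant\eps'$ for every $y\in\bar\alpha(K)(a)$, then $e^{-t\psi(y)}\geqslant\eps'$ gives
\[\psi(y)\leqslant\frac{\log(1/\eps')}{t}=CM,\qquad C=\frac{\log(1/\eps')}{\eta},\]
which is the claim. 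So it suffices to bound $\phi:=\phi_t$ from below over the orbit. Realising $\phi$ by GNS as $\phi(\cdot)=\langle\pi(\cdot)\xi,\xi\rangle$ with $\|\xi\|=1$, I restrict $\pi$ to the abelian normal subgroup $A$ and let $\mu$ be the probability measure on $\widehat A$ with $\phi|_A=\widehat\mu$ (Bochner). Since $\bar\alpha=\alpha\circ\pi$, the dual $K$-action on $\widehat A$ factors through $K_0$, and for $y=\bar\alpha(k)(a)$ one computes
\[\phi\bigl(\bar\alpha(k)(a)\bigr)=\int_{\widehat A}\chi(a)\,d(g_*\mu)(\chi)=\widehat{g_*\mu}(a),\qquad g=\pi(k),\]
where $g_*\mu$ is the push-forward under the dual action and $g$ ranges over all of $K_0$.

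\textbf{A positive definite function on $K_0$.} Next I compare $\mu$ with its translates through the Hellinger affinity $\mathrm{Aff}(\mu,\nu)=\int\sqrt{d\mu\,d\nu}$, and set $\Phi(g):=\mathrm{Aff}(\mu,g_*\mu)$. This is positive definite on $K_0$: for any finite family $g_1,\dots,g_m$, taking $\lambda=\sum_i(g_i)_*\mu$ as a common dominating measure, the Gram matrix $\bigl[\langle h_i,h_j\rangle\bigr]$ with $h_i=\sqrt{d(g_i)_*\mu/d\lambda}\in L^2(\lambda)$ equals $\bigl[\mathrm{Aff}((g_i)_*\mu,(g_j)_*\mu)\bigr]$, hence is positive semidefinite; moreover $0\leqslant\Phi\leqslant1$ and $\Phi(1)=1$. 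On generators $\Phi$ is close to $1$: writing $\xi_u=\pi((0,u))\xi$, the measure $(u_0)_*\mu$ (with $u_0=\pi(u)$) is the spectral measure of $\xi_u$ for the same projection-valued measure, so the standard bound $\mathrm{Aff}(\mu_\xi,\mu_\zeta)\geqslant|\langle\xi,\zeta\rangle|$ (from $|\langle E(\cdot)\xi,\zeta\rangle|\leqslant\sqrt{d\mu_\xi\,d\mu_\zeta}$) yields $\Phi(u_0)\geqslant|\phi(u)|=e^{-t\psi(u)}\geqslant e^{-\eta}$, which is as close to $1$ as I like by shrinking $\eta$.

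\textbf{The crux, and the main obstacle.} The hypothesis applied directly to $\Phi$ only gives $\Phi(g)\geqslant\eps$ for all $g$, and this is \emph{too weak}: it yields only $d_{TV}(\mu,g_*\mu)\leqslant\sqrt{1-\eps^2}$, which need not be $<1/2$ for small $\eps$; and telescoping along a word $g=u_1\cdots u_m$ gives the useless $d_{TV}(\mu,g_*\mu)\leqslant m\sqrt{2\delta}$, blowing up with the diameter. This is exactly the step where genuine \emph{expansion}, not mere connectedness of $Cay(K_0,U_0)$, is needed. I therefore use the hypothesis in its spectral-gap (uniform Kazhdan) form: realising $\Phi(g)=\langle U_g s_\mu,s_\mu\rangle$ for the canonical unitary (half-density/Koopman) representation $U$ of $K_0$ with unit vector $s_\mu$, the bound $\Phi(u_0)\geqslant e^{-\eta}$ reads $\|U_{u_0}s_\mu-s_\mu\|^2\leqslant2\eta$, i.e. $s_\mu$ is almost invariant along $U_0$. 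The uniform gap $\kappa=\kappa(\eps,\delta)$ then forces the part $s_\mu^\perp$ orthogonal to the $U$-invariant vectors to obey $\|s_\mu^\perp\|^2\leqslant2\eta/\kappa^2$, whence $\Phi(g)\geqslant1-4\eta/\kappa^2$ for \emph{every} $g$. Consequently $d_{TV}(\mu,g_*\mu)$ is uniformly small; since $\widehat\mu(a)=\phi(a)\geqslant e^{-\eta}$ concentrates $\mu$ where $\mathrm{Re}\,\chi(a)\approx1$, the small total variation transfers this to $g_*\mu$, giving $\phi(\bar\alpha(k)(a))=\widehat{g_*\mu}(a)\geqslant\eps'>0$ uniformly, and the reduction above then produces $C=C(\eps,\delta)$. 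The delicate point I expect to wrestle with is precisely this upgrade: extracting from the stated pointwise hypothesis the quantitative self-improvement ``almost invariant along $U_0\Rightarrow$ close to invariant, uniformly in $n$'', verifying that the non-invariant part of the half-density representation really carries the uniform gap (in the application $K_0=G_n=PGL_2(q)$ this is clean, since these groups have no small nontrivial representations outside the scope of the expander gap), and checking that all constants depend only on $\eps$ and $\delta$.
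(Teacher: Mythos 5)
Your overall architecture---the Schoenberg reduction $\psi\mapsto e^{-t\psi}$, GNS, Bochner on $\widehat A$, the Hellinger affinity $\Phi(g)=\mathrm{Aff}(\mu,g_*\mu)$ as a positive definite function on $K_0$, and the transfer of the concentration of $\mu$ near $\{\chi:\chi(a)\approx 1\}$ to $g_*\mu$---is sound, and amounts to a from-scratch proof of the result the paper imports as a black box: the paper's own proof simply restates \cite[Theorem 3.1]{CI} in terms of positive definite functions (noting that the constants $\eps',\delta'$ there depend only on $\eps,\delta$) and combines it with the elementary Schoenberg lemma (Lemma~\ref{lem:Schoenberg}, identical to your reduction with $t=\delta/M$). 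However, there is a genuine gap exactly at the step you yourself flag as the crux. You propose to use the hypothesis ``in its spectral-gap (uniform Kazhdan) form'', i.e.\ that a unit vector which is $U_0$-almost-invariant must be within $O(\sqrt{\eta}/\kappa)$ of the space of $K_0$-invariant vectors. This does \emph{not} follow from the stated hypothesis: the hypothesis only controls $|\phi(g)|$, and a nontrivial unitary character $\chi$ of $K_0$ has $|\chi|\equiv 1\geqslant 1-\delta$ on $U_0$ and $|\chi|\equiv 1\geqslant \eps$ everywhere, while its GNS vector is orthogonal to the invariant vectors; so no quantitative statement about the orthocomplement of the invariants can be extracted from the hypothesis alone. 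Falling back on ``in the application $K_0=PGL_2(q)$ this is clean'' proves a different statement from the proposition, whose hypothesis is precisely the positive-definite-function condition and nothing more.

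The gap is repairable inside your own framework, and the repair bypasses the Koopman representation entirely. Since $\Phi$ is real-valued with $0\leqslant\Phi\leqslant 1$, apply the hypothesis to the Schur powers $\Phi^m$ (products of positive definite functions are positive definite): from $\Phi(u_0)\geqslant e^{-\eta}$ one gets $\Phi^m(u_0)\geqslant 1-m\eta\geqslant 1-\delta$ for $m=\lfloor\delta/\eta\rfloor$, hence $\Phi(g)\geqslant \eps^{1/m}\geqslant 1-\tfrac{2\eta|\log\eps|}{\delta}$ for \emph{all} $g\in K_0$, which is exactly the uniform closeness to $1$ you wanted, with constants depending only on $(\eps,\delta)$. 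With that substitution, the rest of your argument (the bound $d_{\mathrm{TV}}(\mu,g_*\mu)\leqslant\sqrt{2(1-\Phi(g))}$, Markov's inequality applied to $1-\mathrm{Re}\,\chi(a)\geqslant 0$, and the final exponentiation) goes through and yields $C=C(\eps,\delta)$. This Schur-power manoeuvre is also how the phase/absolute-value issue is handled in the proof of \cite[Theorem 3.1]{CI}, which is the precise point the paper chooses to cite rather than reprove.
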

\begin{proof}

Let us start with a restatement of \cite[Theorem 3.1]{CI} in terms of positive definite functions.
\begin{lem}
Given $\eps>0$ and $\delta>0$, there exist $\eps'$ and $\delta'>0$ such that the following holds. Let $A$ be an abelian group, $K_0$ a group, and consider a homomorphism $\alpha\colon K_0\to \Aut(A)$. Consider another group $K$ with a surjective homomorphism $\pi\colon K\twoheadrightarrow K_0$,  and denote $\bar{\alpha}= \alpha\circ \pi$. Let $U$ be a generating subset of $K$, and let $U_0=\pi(U).$
 Assume that for every positive definite function $\phi$ on $K_0$ satisfying $\inf_{u\in  U_0} |\phi(u)|\geqslant 1-\delta$, we have $\inf_{k\in K_0}|\phi(k)|\geqslant \eps$.
Then, for every $a\in A$, every generating subset $V$ of $L=A\rtimes_{\bar{\alpha}}K$ containing $a$ and $U$, and every positive definite function $\phi'$ on $L$ satisfying $\inf_{v\in V} |\phi'(v)|\geqslant 1-\delta'$, we have $\inf_{y\in \bar{\alpha}(K)(a)}|\phi'(y)|\geqslant \eps'$.
\end{lem}
The fact that $\eps'$ and $\delta'$ only depend on $\eps$ and $\delta$, and not on the particular group $L$ is important for our purposes: this follows from  the proof of \cite[Theorem 3.1]{CI}.  In order to conclude, we use the following classical lemma.
\begin{lem}\label{lem:Schoenberg}
Let $\eps>0$ and $\delta>0$. Let $F$ be a group and $V$ be a finite generating subset of $F$, and $X$ be a subset of $F$. Assume that for every positive definite function $\phi$ on $F$ 
satisfying $\inf_{v\in V} |\phi(v)|\geqslant 1-\delta$, we have $\inf_{x\in X}|\phi(x)|\geqslant \eps$. Then every conditionally negative definite function $\psi$ on $F$ satisfies 
$$\sup_{x\in X}\psi(x)\leqslant \frac{-\log \eps}{\delta}\sup_{v\in V}\psi(v).$$
\end{lem}
\begin {proof}
Clearly we can assume that $\psi$ is non-zero. In particular, it is non-zero on the generating set $V$. 
We normalize $\psi$ so that $\sup_{v\in V}\psi(v)= 1$, and
consider the positive definite function $\phi=\exp(-\delta\psi)$. 
We have that $\inf_{v\in V} |\phi(v)|\geqslant e^{-\delta}\geqslant 1-\delta$. Therefore, 
$\inf_{x\in X}|\phi(x)|\geqslant \eps$,  from which we deduce the lemma.
\end{proof}

Applying Lemma \ref{lem:Schoenberg} to $F=L$ now yields the conclusion of Proposition \ref{prop:semi direct} with $C=\frac{-\log \eps'}{\delta'}.$
\end{proof}

\section{Further questions and conjectures}

Our main result, Theorem~\ref{thm:main}, provides a finitely generated example. The next question is natural 
and has also an interest in the setting of the Novikov conjecture.  Observe that being an extension of two groups with the Haagerup property, 
our group $\Z/2\Z\wr_G H$ satisfies
the strong Baum-Connes conjecture (which is strictly stronger than the Baum-Connes conjecture with coefficients~\cite{MaNe}).

\begin{que}
Does there exist a finitely \emph{presented} group that does not coarsely embed into a Hilbert space and yet does not contain a weakly embedded expander?
\end{que}

In positive direction, 
\begin{que}[\cite{DG}]
Does a central extension of $\Z$ by a coarsely embeddable group admit a coarse embedding into a Hilbert space?
\end{que}

From the quantitative side, given a dg-bounded expander $\Gamma=(\Gamma_n)_{n\in\mathbb{N}}$, 
Hume proves the existence of $2^{\aleph_0}$ expanders $\Gamma^r=(\Gamma_n^r)_{n\in\mathbb{N}},\ r\in\mathbb R,$ such that
there is no \emph{regular}\footnote{A map between graphs is \emph{regular} if it is Lipschitz and pre-images of vertices have uniformly bounded cardinality. It is a far generalization of coarse, and so, of quasi-isometric, embeddings.} map $\Gamma^r\to\Gamma^{r'}$ for $r\not=r'$~\cite{Hume}.  Namely, given $M,N\subseteq \mathbb N$ with $N\setminus M$ infinite, he shows 
(up to passing to a subsequence so that $\vert \Gamma_{n+1}\vert / \vert \Gamma_n\vert\to\infty$  as $n\to\infty$) that
there is no regular map from $\Gamma^N=(\Gamma_n)_{n\in N}$ to $\Gamma^M=(\Gamma_n)_{n\in M}.$ 
We observe that taking finite-sheeted graph coverings as in Proposition~\ref{prop:surjC} provides $2^{\aleph_0}$ graphs with walls such
that there is no regular map from $\Lambda^N=(\Lambda_n)_{n\in N}$ to $\Lambda^M=(\Lambda_n)_{n\in M}.$ 

Taking $G(\Gamma_N)$ defined by $\Gamma_N$
 (up to passing to a subsequence such that  $\gi\Gamma_{n+1}>2\vert \Gamma_n\vert$ as $n\to\infty$), 
gives $2^{\aleph_0}$ special Gromov monsters $G_r,\ r\in\mathbb R,$ such that there is no
regular (hence, coarse or quasi-isometric embedding) map $G_r\to G_{r'}$ whenever $r\not =r'$~\cite[Theorem 2.9]{Hume}.
Inspired by this result and our observation above, we have the following

\begin{conj}
There exists $2^{\aleph_0}$ regular
equivalence classes of Haagerup monsters. 
\end{conj}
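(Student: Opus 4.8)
The plan is to realize this continuum of Haagerup monsters by applying Proposition~\ref{prop:surjC} to Hume's continuum of pairwise regularly inequivalent expanders, and then to transfer the separation of the underlying graphs with walls to a separation of the associated monster groups. Concretely, I would start from a fixed dg-bounded expander $\Gamma=(\Gamma_n)_{n\in\mathbb{N}}$ whose components are Cayley graphs (Example~\ref{ex:lps}), and, following~\cite{Hume}, pass to a continuum of subsequences $\Gamma^N=(\Gamma_n)_{n\in N}$, indexed by a suitable family of subsets $N\subseteq\mathbb{N}$, for which there is no regular map $\Gamma^N\to\Gamma^M$ whenever $N\setminus M$ is infinite. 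For each such $N$, Proposition~\ref{prop:surjC} produces a graph with walls $\Lambda^N=(\Lambda^N_k)_k$ with Cayley-graph components, isometrically embedded in the Cayley graph of a Haagerup monster $H_N:=G(\Lambda^N)$. As already observed just above this conjecture, the finite-sheeted coverings of Proposition~\ref{prop:surjC} preserve the regular-map separation, so the sequences $\Lambda^N$ are themselves pairwise regularly inequivalent.

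The engine of the argument would be a numerical invariant that is monotone under regular maps: the separation profile (equivalently, a Poincar\'e profile) $\mathrm{sep}_X(n)$, in the sense used in~\cite{Hume}. Monotonicity means that a regular map $X\to Y$ forces $\mathrm{sep}_X\preceq\mathrm{sep}_Y$ up to the usual rescaling constants, so two spaces whose profiles are incomparable cannot be regularly equivalent. The separation of the expanders $\Gamma^N$, and hence of the walled graphs $\Lambda^N$, is precisely the statement that their profiles are pairwise incomparable. Thus it suffices to show that the profile of the monster $H_N$ is comparable to that of the defining graph sequence $\Lambda^N$; the separation of the $\Lambda^N$ would then pass verbatim to the $H_N$, producing $2^{\aleph_0}$ distinct regular equivalence classes.

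Establishing this comparison is the crux, and it splits into two inequalities. The lower bound $\mathrm{sep}_{\Lambda^N}\preceq\mathrm{sep}_{H_N}$ is immediate from the isometric embedding $\Lambda^N\hookrightarrow Cay(H_N,S)$ supplied by Proposition~\ref{prop:surjC} together with monotonicity under isometric (hence regular) inclusions. The opposite inequality is the main obstacle. Here one must exploit the $C'(\lambda)$-small cancellation geometry: a ball in $Cay(H_N,S)$ of radius a fixed fraction of the girth is either tree-like, coming from the free part of the presentation, or isometric to a piece of some component $\Lambda^N_k$. Tree-like regions contribute only a bounded separation profile, so the global profile of $H_N$ ought to be governed, at every relevant scale, by the components $\Lambda^N_k$ alone. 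Making this precise---controlling how the separated copies of the graphs glue along short pieces, and ruling out any profile inflation created by the small-cancellation identifications---is exactly the technical content that parallels Hume's passage from expanders to Gromov monsters in~\cite[Theorem 2.9]{Hume}, now carried out in the walled, Haagerup setting.

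Finally, with $\mathrm{sep}_{H_N}\asymp\mathrm{sep}_{\Lambda^N}$ in hand, the pairwise incomparability of the profiles $\mathrm{sep}_{\Lambda^N}$ transfers to the $H_N$: for $N\neq M$ at least one direction of a regular map between $H_N$ and $H_M$ is obstructed, so no two are regularly equivalent. Since the index family has cardinality $2^{\aleph_0}$, this yields a continuum of regular equivalence classes of Haagerup monsters. I expect the entire difficulty to be concentrated in the upper profile bound for $H_N$; the remaining ingredients are the assembly of Proposition~\ref{prop:surjC}, the covering observation recorded above, and the formal monotonicity of the separation profile.
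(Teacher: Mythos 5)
The statement you are addressing is presented in the paper as a \emph{conjecture}: the authors give no proof, only the motivating observation (in the paragraph preceding it) that Hume's continuum of pairwise regularly inequivalent expanders survives the finite-sheeted coverings of Proposition~\ref{prop:surjC} at the level of the graphs with walls $\Lambda^N$. Your proposal reproduces exactly this motivation and then lays out the natural strategy via monotonicity of the separation profile under regular maps, but it does not close the one step that actually matters: the upper bound $\mathrm{sep}_{H_N}\preceq\mathrm{sep}_{\Lambda^N}$ for the monster $H_N=G(\Lambda^N)$. You yourself label this ``the crux'' and ``the main obstacle'' and describe only what ``ought to'' happen. As it stands this is a research plan, not a proof, and the fact that the authors left the statement as a conjecture indicates that this step was not regarded as routine.

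The difficulty you are gesturing at is genuine. In a graphical $C'(\lambda)$ small cancellation group, a ball of radius a fixed fraction of the relevant girth is not simply ``tree-like or isometric to a piece of one component'': it is assembled from many overlapping copies of the relator graphs $\Lambda^N_k$ (for all $k$ whose girth exceeds the scale in question) glued along pieces, together with the free part, and one must rule out that these gluings inflate the separation (or Poincar\'e) profile beyond that of the individual components. Hume's passage from expanders to Gromov monsters in \cite{Hume} relies on a specific lacunarity condition on the girths ($\gi\Gamma_{n+1}>2\vert\Gamma_n\vert$ after passing to a subsequence) and on an actual upper-bound computation for the monsters; you would need to (i) check that the analogous lacunarity can be arranged simultaneously with the covering construction of Proposition~\ref{prop:surjC} and with the labelling of Theorem~\ref{thm:os} (each of which already forces passage to subsequences), and (ii) carry out the profile upper bound for $H_N$ in the walled setting. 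Until (ii) is done, the pairwise separation of the graph sequences $\Lambda^N$ tells you nothing about the groups: the isometric embeddings give only the lower bound on each $\mathrm{sep}_{H_N}$, and lower bounds alone cannot distinguish any two of the $H_N$ up to regular equivalence.
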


\begin{conj}
There exists $2^{\aleph_0}$ regular equivalence classes of counterexamples as constructed in our Theorem~\ref{thm:main} and Theorem~\ref{thm:mainbis}. 
\end{conj}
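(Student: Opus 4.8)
The plan is to run the construction of Theorem~\ref{thm:main} once for each member of Hume's family and to separate the resulting groups by a coarse invariant. Fix a dg-bounded expander $\Gamma=(\Gamma_n)_{n\in\N}$ whose components are Cayley graphs, and pass to a subsequence with $\vert\Gamma_{n+1}\vert/\vert\Gamma_n\vert\to\infty$ and $\gi\Gamma_{n+1}>2\vert\Gamma_n\vert$, as in the preceding discussion. For each infinite $N\subseteq\N$ set $\Gamma^N=(\Gamma_n)_{n\in N}$ and apply Proposition~\ref{prop:surjC} to obtain the special Gromov monster $G_N=G(\Gamma^N)$, the Haagerup monster $H_N$, the surjection $H_N\twoheadrightarrow G_N$, and the counterexample $\G_N=\Z/2\Z\wr_{G_N}H_N$; by Theorem~\ref{thm:main} and Theorem~\ref{thm:mainbis} each $\G_N$ is of the required type. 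Since there are only $2^{\aleph_0}$ finitely generated groups up to isomorphism, and hence at most $2^{\aleph_0}$ regular equivalence classes, it suffices to produce $2^{\aleph_0}$ index sets $N$ with the $\G_N$ pairwise regularly inequivalent. Choosing an almost disjoint family $\{N_i\}_{i\in I}$ of infinite subsets of $\N$ with $\vert I\vert=2^{\aleph_0}$ guarantees that $N_i\setminus N_j$ and $N_j\setminus N_i$ are both infinite for $i\neq j$, which is precisely the hypothesis under which Hume's obstruction applies in both directions.

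The mechanism I would use to separate the classes is monotonicity of a large-scale invariant under regular maps. By Corollary~\ref{cor:subgraph} each $\G_N$ contains the relative expanders $Cay(\G_{N,n},\Sigma_{N,n})$ as subgraphs, and Theorem~\ref{thm:Relpoincare} shows that these satisfy a relative Poincar\'e inequality at the scales $\vert X_n\vert=\vert G_{N,n}\vert=\vert\Gamma^N_n\vert$. The separation and Poincar\'e profiles of Hume--Mackay--Tessera are monotone under regular maps and are bounded below, at these scales, by the expansion of $\Gamma^N_n$. The goal is to show that a regular map $\G_N\to\G_M$ would force the profile of $\Gamma^N$ to be dominated by that of $\Gamma^M$, and hence --- exactly as in Hume's argument for the monsters $G_N$~\cite{Hume} --- would contradict the incomparability of $\Gamma^N$ and $\Gamma^M$ when $N\setminus M$ is infinite. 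Concretely, one restricts the regular map to the embedded relative expanders and argues that it descends to a profile comparison between the defining graphs $\Gamma^N$ and $\Gamma^M$.

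The main obstacle, and the reason the statement is posed as a conjecture, is the passage from the graphs to the groups. Even knowing that there is no regular map $\Gamma^N\to\Gamma^M$ (Hume) or $\Lambda^N\to\Lambda^M$ (our observation above), it does not follow formally that there is none $\G_N\to\G_M$: the graphically presented groups $G_N$, $H_N$ and the wreath product $\G_N$ carry a far richer geometry than the sparse graphs defining them and could, a priori, admit regular maps that the graphs forbid. The same difficulty underlies the companion conjecture on $2^{\aleph_0}$ Haagerup monsters, and it is not removed by a soft reduction, since $H_N\twoheadleftarrow\G_N$ is only Lipschitz while $H_N\hookrightarrow\G_N$ runs the other way. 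A second, more technical point is that the wreath product distorts the expander: the test vertices $\delta_g$ lie at $\G_N$-distance comparable to $\vert g\vert_{G_N}+\vert g'\vert_{G_N}+d_{G_N}(g,g')$, so the copy of the expander recovered inside $\G_N$ is a coned, rescaled version of $\Gamma^N_n$ rather than $\Gamma^N_n$ itself, and one must check that a regular map of wreath products descends to a genuine regular comparison of these rescaled expanders without collapsing the relevant scales. Establishing that the profile of $\G_N$ faithfully records the sparsity pattern $N$ is the crux, and I expect it to require a direct profile computation for these wreath products rather than a formal deduction from the graph-level statements.
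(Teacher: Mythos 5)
The first thing to say is that the paper does not prove this statement: it is posed, verbatim, as a conjecture, stated immediately after the authors recall Hume's continuum of expanders \cite{Hume} and observe that the coverings of Proposition~\ref{prop:surjC} produce $2^{\aleph_0}$ pairwise regularly inequivalent graphs with walls. Your proposal reproduces exactly the program suggested by that discussion --- run the construction over $\Gamma^N=(\Gamma_n)_{n\in N}$ for an almost disjoint family of infinite subsets $N\subseteq\N$, after passing to a subsequence with $\vert\Gamma_{n+1}\vert/\vert\Gamma_n\vert\to\infty$ and $\gi\Gamma_{n+1}>2\vert\Gamma_n\vert$, and note that $2^{\aleph_0}$ is also an upper bound on the number of classes --- and then, to your credit, you state plainly that the decisive step is missing: non-existence of regular maps $\Gamma^N\to\Gamma^M$ (or $\Lambda^N\to\Lambda^M$) does not formally yield non-existence of regular maps $\G_N\to\G_M$ between the groups. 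That missing step is the entire content of the conjecture, so what you have written is a correct framing of the problem together with an honest admission that it is open, not a proof; this matches the paper, which offers no proof to compare against.

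One concrete warning about the separation/Poincar\'e-profile mechanism you sketch. A lower bound on the profile of $\G_N$ ``by the expansion of $\Gamma^N_n$'' cannot hold in the naive sense: an injective subgraph inclusion is in particular a weak embedding, and by Theorem~\ref{thm:mainbis} the group $\G_N$ contains no weakly embedded expander, so the subgraphs $Cay(\G_{N,n},\Sigma_{N,n})$ of Corollary~\ref{cor:subgraph} are \emph{not} expanders, nor are the copies of $\Lambda_k$ inside them (the sequence $\Lambda$ even coarsely embeds into Hilbert space). The only known obstruction carried by these subgraphs is the relative Poincar\'e inequality of Theorem~\ref{thm:Relpoincare}, a long-range inequality along the coset directions $X_n$, and no monotonicity under regular maps of an invariant detecting such \emph{relative} expansion is available off the shelf. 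Moreover, a regular map $\G_N\to\G_M$ restricted to $\G_{N,n}$ lands in all of $\G_M$, not in any particular $\G_{M,m}$, so there is no formal descent to a comparison of $\Gamma^N$ with $\Gamma^M$; your second paragraph implicitly assumes such a descent before your third paragraph retracts it. Your closing assessment --- that one would need a direct quantitative computation for these wreath products showing their large-scale geometry faithfully records the sparsity pattern $N$ --- is the right diagnosis of exactly why the authors left this statement as a conjecture.
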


\begin{bibdiv}
\begin{biblist}

\bib{AR}{book}{
    author= {Anantharaman-Delaroche, G.},
     author= {Renault, J.},
   title = {Amenable groupoids},
    series={Monographies de L'Enseignement Mathematique},
     volume={36},
    year = {2000},
    publisher = {L'Enseignement Mathematique},
    place= {Geneva},
    pages= {x+196},
}

\bib{AD}{article}{
   author={{Arzhantseva}, G.},
   author={{Delzant}, T.},
    title={Examples of random groups},
 year={2008},
 journal={preprint, available on authors' websites},
}

\bib{AGS}{article}{
   author={Arzhantseva, G.},
   author={Guentner, E.},
   author={{\v{S}}pakula, J.},
   title={Coarse non-amenability and coarse embeddings},
   journal={Geom. Funct. Anal.},
   volume={22},
   date={2012},
   number={1},
   pages={22--36},
}

\bib{AO}{article}{
   author={{Arzhantseva}, G.},
   author={{Osajda}, D.},
    title={Graphical small cancellation groups with the Haagerup property},
  journal = {arXiv:1404.6807},
 year={2014},
}

\bib{AT}{article}{
   author={{Arzhantseva}, G.},
   author={{Tessera}, R.},
    title={Relative expanders},
  journal={Geom. Funct. Anal.},
   volume={25},
   date={2015},
   number={2},
   pages={317--341},
   }

\bib{CD}{article}{
   author={Cave, Ch.},
   author={Dreesen, D.},
   title={Embeddability of generalised wreath products},
   journal={Bull. Aust. Math. Soc.},
   volume={91},
   date={2015},
   number={2},
   pages={250--263},
}

\bib{CCJ}{book}{
    author= {Cherix, P.-A.},
     author= {Cowling, M.},
     author= {Jolissaint, P.},
     author= {Julg, P.},
     author= {Valette, A.},
    title = {Groups with the Haagerup property (Gromov's a-T-menability)},
    series={Progress in Mathematics},
     volume={197},
    year = {2001},
    publisher = {Birkha\"user Verlag},
    place= {Basel},
    pages= {x+207},
}

\bib{CI}{article}{
   author={Chifan, I.},
   author={Ioana, A.},
   title={On Relative property (T) and Haagerup's property},
   journal={Transactions of the AMS},
   volume={363},
   date={2011},
   pages={6407--6420},


}

\bib{CSV}{article}{
   author={de Cornulier, Y.},
   author={Stalder, Y.},
   author={Valette, A.},
   title={Proper actions of wreath products and generalizations},
   journal={Trans. Amer. Math. Soc},
   volume={364},
   date={2012},
   pages={3159--3184},
}

\bib{DG}{article}{
   author={Dadarlat, M.},
   author={Guentner, E.},
   title={Constructions preserving Hilbert space uniform embeddability of
   discrete groups},
   journal={Trans. Amer. Math. Soc.},
   volume={355},
   date={2003},
   number={8},
   pages={3253--3275},
}

\bib{Gr_ai}{article}{
   author={Gromov, M.},
   title={Asymptotic invariants of infinite groups},
   conference={
      title={Geometric group theory, Vol.\ 2},
      address={Sussex},
      date={1991},
   },
   book={
      series={London Math. Soc. Lecture Note Ser.},
      volume={182},
      publisher={Cambridge Univ. Press, Cambridge},
   },
   date={1993},
   pages={1--295},
}

\bib{Gr_sp}{article}{
   author={Gromov, M.},
   title={Spaces and questions},
   note={GAFA 2000 (Tel Aviv, 1999)},
   journal={Geom. Funct. Anal.},
   date={2000},
   pages={118--161},
}

\bib{Gr_rw}{article}{
   author={Gromov, M.},
   title={Random walk in random groups},
   journal={Geom. Funct. Anal.},
   volume={13},
   date={2003},
   number={1},
   pages={73--146},
}

\bib{Dom}{article}{
   author={Gruber, D.},
   title={Infinitely presented graphical small cancellation groups: Coarse embeddings, acylindrical hyperbolicity, and subgroup constructions},
   journal={PhD Thesis, University of Vienna},
   date={2015},
}

\bib{GK}{article}{
   author={Guentner, E.},
   author={Kaminker, J.}, 
   title={Geometric and analytic properties of
groups},
   journal={in: Noncommutative geometry, Ed. by S. Doplicher and R. Longo, Lecture Notes Math.},
   volume={1831},
   date={2004},
   pages={253--262},
}

\bib{HLS}{article}{
   author={Higson, N.},
   author={Lafforgue, V.},
   author={Skandalis, G.},
   title={Counterexamples to the Baum-Connes conjecture},
   journal={Geom. Funct. Anal.},
   volume={12},
   date={2002},
   number={2},
   pages={330--354},
}

\bib{Hume}{article}{
   author = {{Hume}, D.},
    title={A continuum of expanders},
  journal= {arXiv:1410.0246},
     year={2014},
}

\bib{KYu}{article}{
   author={Kasparov, G.},
   author={Yu, G.},
   title={The Novikov conjecture and geometry of Banach spaces},
   journal={Geom. Topol.},
   volume={16},
   date={2012},
   number={3},
   pages={1859--1880},
}

\bib{KW}{article}{
   author={Kirchberg, E.},
   author={Wassermann, S.},
   title={Permanence properties of $C^*$-exact groups},
   journal={Doc. Math.},
   volume={4},
   date={1999},
}

\bib{Li}{article}{
   author={Li, S.},
   title={Compression bounds for wreath products},
   journal={Proc. Amer. Math. Soc.},
   volume={138},
   date={2010},
   number={8},
   pages={2701--2714},
}

\bib{Lub}{book}{
   author={Lubotzky, A.},
   title={Discrete groups, expanding graphs and invariant measures},
   series={Progress in Mathematics},
   volume={125},
   note={With an appendix by Jonathan D. Rogawski},
   publisher={Birkh\"auser Verlag},
   place={Basel},
   date={1994},
   pages={xii+195},
}

\bib{LPS}{article}{
   author={Lubotzky, A.},
   author={Phillips, R.},
   author={Sarnak, P.},
   title={Ramanujan graphs},
   journal={Combinatorica},
   volume={8},
   date={1988},
   number={3},
   pages={261--277},
}

\bib{Ma}{article}{
   author={Margulis, G. A.},
   title={Explicit group-theoretic constructions of combinatorial schemes
   and their applications in the construction of expanders and
   concentrators},
   language={Russian},
   journal={Problemy Peredachi Informatsii},
   volume={24},
   date={1988},
   number={1},
   pages={51--60},
   translation={
      journal={Problems Inform. Transmission},
      volume={24},
      date={1988},
      number={1},
      pages={39--46}},
}

\bib{M}{article}{
   author={Matous\v{e}k, J.}, 
   title={On embedding expanders
into $\ell\sb p$ spaces},
   journal={Israel J. Math.},
   volume={102},
   date={1997},
   pages={189--197},

}
\bib{MaNe}{article}{
   author={Mayer, R.},
   author={Nest, R.}, 
   title={The Baum-Connes conjecture via localisation of categories},
   journal={Topology},
   volume={45},
   number={2},
   date={2006},
   pages={209--259},

}

\bib{NYu}{book}{
    author= {Nowak, P.},
     author= {Yu, G.},
    title = {Large scale geometry},
    pages = {xiv + 189},
    year = {2012},
    publisher = {Z\"urich: European Mathematical Society (EMS)},
    }
 
 \bib{Oll}{article}{
   author={Ollivier, Y.},
   title={On a small cancellation theorem of Gromov},
   journal={Bull. Belg. Math. Soc. Simon Stevin},
   volume={13},
   date={2006},
   number={1},
   pages={75--89},
}

\bib{Os}{article}{
   author={{Osajda}, D.},
    title={Small cancellation labellings of some infinite graphs and applications},
 journal={arXiv:1406.5015},
     year= {2014},
}

\bib{R}{book}{
   author={Roe, J.},
   title={Lectures on coarse geometry},
   series={AMS University Lecture Series},
   volume={31},
   date={2003},
}

\bib{Sela}{article}{
   author={Sela, Z.},
   title={Uniform embeddings of hyperbolic groups in Hilbert spaces},
   journal={Israel J. Math.},
   volume={80},
   date={1992},
   number={1-2},
   pages={171--181},
}

\bib{Yu}{article}{
   author={Yu, G.},
   title={The coarse Baum-Connes conjecture for spaces which admit a uniform
   embedding into Hilbert space},
   journal={Invent. Math.},
   volume={139},
   date={2000},
   number={1},
   pages={201--240},
}

\end{biblist}
\end{bibdiv}

\bigskip
\footnotesize

\end{document}